\def\Wdn#1\wdn{\marginpar{\tiny #1}}
\long\def\WDN#1\wdn{[WDN: #1]\Wdn[Comment]\wdn}
\sodef\spred{}{.2em}{.9em plus.4em}{1em plus.1em minus.1em}
\newbox\mybox
\def\overtag#1#2#3{\setbox\mybox\hbox{$#1$}\hbox to
  0pt{\vbox to 0pt{\vglue-#3\vglue-\ht\mybox\hbox to \wd\mybox
      {\hss$\ss#2$\hss}\vss}\hss}\box\mybox}
\def\undertag#1#2#3{\setbox\mybox\hbox{$#1$}\hbox to 0pt{\vbox to
    0pt{\vglue#3\vglue\ht\mybox\hbox to \wd\mybox
      {\hss$\ss#2$\hss}\vss}\hss}\box\mybox}
\def\lefttag#1#2#3{\hbox to 0pt{\vbox to 0pt{\vss\hbox to
      0pt{\hss$\ss#2$\hskip#3}\vss}}#1}
\def\righttag#1#2#3{\hbox to 0pt{\vbox to 0pt{\vss\hbox to
      0pt{\hskip#3$\ss#2$\hss}\vss}}#1}
\let\ss\scriptstyle
\def\Dot{\lower.2pc\hbox to 2.5pt{\hss$\bullet$\hss}}
\def\Circ{\lower.2pc\hbox to 2.5pt{\hss$\circ$\hss}}
\def\Vdots{\raise5pt\hbox{$\vdots$}}
\def\splicediag#1#2{\xymatrix@R=#1pt@C=#2pt@M=0pt@W=0pt@H=0pt}
\newcommand\lineto{\ar@{-}}
\newcommand\dashto{\ar@{--}}
\newcommand\dotto{\ar@{.}}
\newtheorem{thm}{Theorem}[section]
\newtheorem{lemma}[thm]{Lemma}
\newtheorem{prop}[thm]{Proposition}
\newtheorem{cor}[thm]{Corollary}
\newtheorem{thm*}{Theorem}
\theoremstyle{definition} 
\newtheorem{defn}[thm]{Definition} 
\newtheorem{ex}[thm]{Example}
\newcommand{\N}{\mathbbm{N}}
\newcommand{\C}{\mathbbm{C}}
\newcommand{\Q}{\mathbbm{Q}}
\newcommand{\R}{\mathbbm{R}}
\newcommand{\Z}{\mathbbm{Z}}
\newcommand{\E}{\mathbbm{E}}
\newcommand{\num}[1]{\lvert #1 \rvert}
\newcommand{\sign}{\operatorname{sign}}
\newcommand{\vx}{\operatorname{vert}}
\DeclareMathOperator*{\conect}{\#}
\newcommand{\morf}[4][\to]{ #2 \colon #3 #1 #4}
\newcommand{\inv}{^{-1}}
\newcommand\@b@gconect[1]{%
\vcenter{\hbox{#1$\m@th\mkern2mu\conect\mkern2mu$}}}
\newcommand\@bigconect{%
\mathchoice{\@b@gconect\huge} 
{\@b@gconect\LARGE} 
{\@b@gconect{}} 
{\@b@gconect\footnotesize} 
}
\newcommand\bigconect{\mathop{\@bigconect}\displaylimits}
\renewcommand{\phi}{\varphi}
\renewcommand{\epsilon}{\varepsilon}
\begin{document}

\bibliographystyle{alpha}


\title[Splice singularities and universal abelian
cover of graph orbifolds]
{Splice diagram singularities and  the universal abelian
cover of graph orbifolds}
\author{Helge M\o{}ller Pedersen}
\address{Matematische Institut\\ Universität Heidelberg
\\ Heidelberg, 69120}
\email{pedersen@mathi.uni-heidelberg.de}
\keywords{surface singularity,
rational homology sphere,
abelian cover, orbifolds}
\subjclass[2000]{32S25, 32S50, 57M10, 57M27}
\begin{abstract}
  Given a rational homology sphere $M$, whose splice diagram
  $\Gamma(M)$ satisfy
  the semigroup condition, Neumann and Wahl were able to define a
  complete intersection surface singularity called splice diagram
  singularity from $\Gamma(M)$. They were also able to show that
  under an additional hypothesis on $M$ called the congruence
  condition, the link of the splice diagram singularity is the
  universal abelian cover of $M$. In this article we generalize the
  congruence condition to the
  class of orbifolds called graph orbifold. We show that under a small
  additional hypothesis, this orbifold congruence condition implies
  that the link or the splice diagram equations is the universal
  abelian cover. We also show that any two node splice diagram
  satisfying the semigroup condition, is the splice diagram of a graph
  orbifold satisfying the orbifold congruence condition. 
\end{abstract}
\maketitle

\section{Introduction}

The topology of an isolated complex surface singularity is
determined by its link, which all turn to be among the class of
$3$-manifolds called graph manifolds, that is the manifolds which only
have Seifert fibered pieces in their JSJ-decomposition. There are
several graph invariants of graph manifolds which is used to study
them. The first is the plumbing diagram of a plumbed $4$-manifold $X$
such that our graph manifold $M$ is the boundary of $X$, this does
give a complete invariant if one assumes the plumbing diagram is in a
normal form (see \cite{plumbing}), of which there are several
different. Now the plumbing diagrams can be quite large and does not
always display the properties of the manifold clearly, so we are
interested in an other invariant called splice diagram. Splice
diagrams where original introduced in \cite{EisenbudNeumann} and
\cite{Siebenmann} for integer homology sphere graph manifolds. It was
then later generalized by Neumann and Wahl to rational homology spheres in
\cite{NeumannWahl3}. They used it extensively in
\cite{neumannandwahl1} and \cite{neumannandwahl2}, and especially
their use in \cite{neumannandwahl2} is of interest to us.

In \cite{neumannandwahl2} they use the splice diagram of a singularity
link $M$ satisfying what they call the semigroup condition, to
construct a set of equations called splice diagram equations defining
an isolated complete intersection surface singularity $X$. They then
showed that if $M$ satisfy an additional hypothesis called the
congruence condition, the link of $X$ is the universal abelian cover
of $M$. In \cite{myarticle} I showed that the splice diagram of any
graph manifold $M$ always determines the universal abelian cover of
$M$. So combining these to result one gets a nice description of the
universal abelian cover of a graph manifold $M$, as the link of
complete intersection, provided that there is some graph manifold $M'$
with the same splice diagram satisfying the congruence condition. This
already implies that the congruence condition might not be needed,
moreover the following splice diagram 
$$\splicediag{8}{30}{
  &\Circ&&&&\Circ\\
  \Gamma=&&\Circ \lineto[ul]_(.5){3}
  \lineto[dl]^(.5){18}
  \lineto[rr]^(.25){23}^(.75){15}&& \Circ
  \lineto[ur]^(.5){2}
  \lineto[dr]_(.5){3}&\\
  &\Circ&&&&\Circ\hbox to 0 pt{~\hss} }$$ 
have no manifolds with it as its splice diagram satisfying the
congruence condition, even though it satisfy the semigroup condition and
it is the splice diagram of  $4$ different manifolds. Nonetheless one can
construct a plumbing diagram of the abelian cover use the algorithm derived
from my proof of Theorem 6.3 in \cite{myarticle} which is explained in
more detail in \cite{constructinabcovers} where this example is
explicitly constructed, and one can find a dual resolution graph for
the resolution of the splice diagram equation of $\Gamma$ by hand, and
it shows that also in this case is the link of the splice diagram
singularity the universal abelian cover. This again indicates that the
congruence condition is not needed. Even more interesting is the next
example. The following splice diagram
$$\splicediag{8}{30}{
  &\Circ&&&&\Circ\\
  \Gamma'=&&\Circ \lineto[ul]_(.5){3}
  \lineto[dl]^(.5){15}
  \lineto[rr]^(.25){23}^(.75){15}&& \Circ
  \lineto[ur]^(.5){2}
  \lineto[dr]_(.5){3}&\\
  &\Circ&&&&\Circ\hbox to 0 pt{~.\hss} }$$ 
satisfy the semigroup condition, and the universal abelian cover is
the link of the splice diagram singularity. But there are no manifolds
with $\Gamma'$ as its splice diagram. So what is the link the
universal abelian cover of? To prove Theorem 6.6 of \cite{myarticle} I
had to generalize the notion of splice diagram to a class of
$3$-dimensional orbifolds which I called graph orbifolds, and
$\Gamma'$ is the splice diagram of several graph orbifold.

This leads to the purpose of this article, to generalize the
congruence condition to graph orbifolds, which is done in Section
\ref{orbifoldcongruencecondition}. Show that, under a small extra
hypothesis, the link of at splice diagram equation of $\Gamma(M)$ is
the universal abelian cover of $M$ if $M$ satisfy the orbifold
congruence condition in Section \ref{universalabcoveroforbifolds}. In
section \ref{constructingorbifolds} we show that this is indeed an
extension of the results of \cite{neumannandwahl2}, by given any two
nodes splice diagram $\Gamma$ satisfying the semigroup condition constructing a
graph orbifold $M$ satisfying the orbifold congruence condition, with
$\Gamma$ as its splice diagram. Section \ref{graphorbifolds}
introduces graph orbifolds, Section \ref{splicediagrams} introduces
splice diagram, and Section \ref{spliceeq} the splice diagram
equations. Section \ref{plumninganddisc} introduces the discriminant
group which is needed in the definition of the congruence condition.

\section{Graph Orbifolds}\label{graphorbifolds}

To generalize the conditions for the splice diagram equation to define
the universal abelian cover, we have to extend the notion of splice
diagrams to graph orbifolds, so in this section we define
graph orbifolds.

\begin{defn}
A \emph{graph orbifold} is a 3-dimensional orbifold $M$, in which there
exist a finite collection $\{T_i\}_{i\in 1,\dots,n}$ of smoothly
embedded tori, such that
$M-\bigcup_{i=1}^nT_i$ is a collection of $S^1$ orbifold fibrations
over orbifold surfaces.
\end{defn}

We will only consider graph orbifolds which has compact closure which
is also a graph orbifold, and
the boundary components will always be smoothly embedded tori. Notice
that if $M$
is smooth then $M$ is a graph manifold, since any $S^1$ orbifold
fibration over an orbifold surface with smooth total space is Seifert
fibered. 

Next we want to describe how a graph orbifold look locally, so we will
look at a $S^1$ orbifold fibration over orbifold surface
$\morf{\pi}{M}{\Sigma}$. If 
$x\in\Sigma$ is not an orbifold point then there is a disk
neighborhood $D$ of $x$ such that $M\vert_{\pi\inv(x)}$ is a
trivial fibered solid torus. So the interesting situation is when $x\in\Sigma$
is an orbifold point. This means that a neighborhood $U$ of $x$ is
homeomorphic to $\R^2/(\Z/\alpha\Z)$ for a $\alpha>1$, where the group acts as
rotation. Then $M\vert_{\pi\inv(U)}$ is homeomorphic to $(U\times
S^1)/(\Z/\alpha\Z)$ where $[k]\in\Z/\alpha\Z$ acts by $[k](z,s)=(e^{2\pi
  ikv/\alpha}z,e^{-2\pi ikq/\alpha}s)$ where we consider
$U=\C$, $q\mid\alpha$ and $\gcd(v,\alpha)=1$. Let $\beta'$ satisfy
$\beta'v\equiv -1\mod \alpha$, and let $\beta=\beta'q$.
We call the pair $(\alpha,\beta)$ the \emph{Seifert
  invariants} of the singular fiber over $x$ if we choose
$0\leq\beta<\alpha$. Notice if 
$\gcd(\alpha,\beta)=1$ then the action on $(U\times S^1)$ is free and
hence $\pi\inv(U)$ is smooth. We call $q=\gcd(\alpha,\beta)$ for the
\emph{orbifold degree} of the singular fiber. We will say that any non
singular fiber has orbifold degree $1$, which off course follows from
the Seifert invariants of a non singular fiber being $(1,0)$. We denote by
$N_{(\alpha,\beta)}$ a solid torus neighborhood of the singular fiber
with Seifert invariants $(\alpha,\beta)$, and by abuse of notation we
call $N_q$ a solid torus neighborhood of a singular fiber of orbifold
degree $q$.

One can now construct an unique decomposition of a compact graph orbifold $M$,
the following way. Let $q_1,\dots,q_n$ be the orbifold degrees of all
the singular fibers which has orbifold degree greater than $1$, and let
$N_{q_i}$ be solid torus neighborhoods of the corresponding singular
fiber as above. Then $M'=M-\bigcup_{i=1}^nN_{q_i}$ is a graph manifold with
boundary, and hence the JSJ-decomposition of $M'$ gives us a
collection of Seifert fibered manifolds with boundary $M_j'$'s. We
then makes the pieces of the decomposition of $M$ by gluing back the
$N_{q_i}$ in their original places. This is unique since the JSJ
decomposition of $M'$ is unique. We will call this decomposition for
the \emph{JSJ-decomposition} of $M$. 

Let $K\subset M$ be an orbifold curve with Seifert invariants
$(\alpha,\beta)$, then the gluing of $N_{(\alpha,\beta)}$ into
$M'=M-N_{(\alpha,\beta)}$ is completely determined by a simple closed
curve in $T=\partial M'$ of slope $\beta/\alpha$. Let
$q=\gcd(\alpha,\beta)$ be the orbifold degree of $K$ and let
$\alpha'=\alpha/q$ and $\beta'=\beta/q$. Then $(\alpha',\beta')$
determines a Seifert fibration of the solid torus
$N_{(\alpha',\beta')}$, notice that the Seifert fibration of
$N_{(\alpha',\beta')}$ and the orbifold structure on
$N_{(\alpha,\beta)}$ determines the same closed curve in $\partial
M'$ since $\beta'/\alpha'=\beta/\alpha$, therefore if $M_K=M'\bigcup
N_{(\alpha',\beta')}$ using the same gluing, $M_K$ and $M$ has the
same topology, even though $M$ and $M_K$ is not equal as graph
orbifolds since the singular fiber $K$ has different Seifert
invariants. If one replaces $M$ with $M_{K_i}$ for all curves $K_i$
which has orbifold degree greater than one, we get a graph manifold
$\overline{M}$ which have the same topology as $M$, we call
$\overline{M}$ the \emph{underlying manifold of $M$}.  

In the definition of splice diagram of a graph manifold $M$, one uses
the order for the first singular homology group of some graph
manifolds constructed from $M$. Now the above implies that if we just
use the singular homology groups when we define the splice diagram of
a graph orbifold $M$, then one just get the splice diagram of the
underlying manifold. So if using orbifold to extend the theory has to
have any meaning, we need another homology theory, which reduces to
singular homology in $M$ is smooth, but sees the orbifold structure
otherwise. This is going to be what we will call \emph{orbifold
  homology}. Since we only need the first orbifold homology group, it
suffices to define $H^{orb}_1(M)$ to be the abelianazation of the
orbifold fundamental group $\pi^{orb}_1(M)$. The orbifold fundamental
group is a more classical object, and have been studied a lot,
especially in the geometrization of $3$ dimensional orbifolds (see e.g.\
\cite{scott}). Since $\pi^{orb}(M)=\pi(M)$ if $M$ is smooth it follows
that $H^{orb}_1(M)=H_1(M)$ if $M$ is smooth. 

\begin{prop}
 $H^{orb}_1(T_{\alpha,\beta})=\Z\oplus\Z/q\Z$ where $q$ is
  the orbifold degree of the central fiber, i.e.\
  $\gcd(\alpha,\beta)$.
\end{prop}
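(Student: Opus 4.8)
The plan is to compute $\pi^{orb}_1(T_{\alpha,\beta})$ directly by exhibiting its orbifold universal cover and then to read off $H^{orb}_1(T_{\alpha,\beta})$ as the abelianization. Recall that $T_{\alpha,\beta}=N_{(\alpha,\beta)}$ is, by construction, the quotient $(\C\times S^1)/(\Z/\alpha\Z)$ in which $[k]$ acts by $(z,s)\mapsto(e^{2\pi ikv/\alpha}z,e^{-2\pi ikq/\alpha}s)$, with $\gcd(v,\alpha)=1$ and $q=\gcd(\alpha,\beta)$ dividing $\alpha$.

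First I would pull this picture back along the universal covering $p\colon\C\times\R\to\C\times S^1$, $(z,t)\mapsto(z,e^{2\pi it})$. The generator of $\Z/\alpha\Z$ lifts to the diffeomorphism $a\colon(z,t)\mapsto(e^{2\pi iv/\alpha}z,\,t-q/\alpha)$, and the deck group of $p$ is generated by $b\colon(z,t)\mapsto(z,t+1)$. Let $G=\langle a,b\rangle$. A direct check shows $a$ and $b$ commute and $a^\alpha=b^{-q}$, so $G$ is abelian with presentation $\langle a,b\mid[a,b],\,a^\alpha b^q\rangle$, i.e.\ $G\cong\Z^2/\langle(\alpha,q)\rangle$.

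Next I would verify that $\C\times\R\to T_{\alpha,\beta}$ is an orbifold universal covering with deck group $G$. This splits into two parts: (i) the quotient of $\C\times\R$ by $\langle b\rangle$ is $\C\times S^1$, and the residual action of $G/\langle b\rangle\cong\Z/\alpha\Z$ is exactly the prescribed one, so $(\C\times\R)/G=T_{\alpha,\beta}$ with its orbifold structure; (ii) $G$ acts properly discontinuously, and a short computation with the elements $a^ib^j$ shows that the only non-free orbit is the core $\{0\}\times\R$, whose stabilizer is the cyclic group $\langle a^{\alpha/q}b\rangle$ of order $q$ acting on the normal $\C$-direction by the primitive rotation $z\mapsto e^{2\pi iv/q}z$ --- precisely the stated orbifold degree of the central fiber. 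Since $\C\times\R$ is contractible and smooth, it is the orbifold universal cover of $T_{\alpha,\beta}$, and hence $\pi^{orb}_1(T_{\alpha,\beta})=G$.

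Finally, $G$ is already abelian, so $H^{orb}_1(T_{\alpha,\beta})=G=\Z^2/\langle(\alpha,q)\rangle$; since $q\mid\alpha$ the vector $(\alpha/q,1)$ is primitive, so $\{(\alpha/q,1),(1,0)\}$ is a basis of $\Z^2$ and in this basis the relator is $q\cdot(\alpha/q,1)$, which exhibits the quotient as $\Z\oplus\Z/q\Z$. I expect the only real obstacle to be the bookkeeping in the middle step: one must be sure the lifted $G$-action genuinely recovers the prescribed orbifold (in particular the transverse cone structure of order $q$ along the core) rather than some other orbifold sharing the same underlying solid torus. A slicker alternative, avoiding the explicit developing map, is to note that $T_{\alpha,\beta}$ is an orbifold $D^2(q)$-bundle over $S^1$ --- necessarily trivial, since the group of orbifold diffeomorphisms of the cone disk $D^2(q)$ fixing the cone point is connected --- so that $T_{\alpha,\beta}\cong D^2(q)\times S^1$ and $\pi^{orb}_1(T_{\alpha,\beta})=\Z/q\Z\times\Z$ outright.
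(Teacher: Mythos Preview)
Your argument is correct. The paper's own proof is much terser and takes a purely algebraic route: it simply quotes a presentation
\[
\pi^{orb}_1(T_{\alpha,\beta})=\langle q,h,t \mid q^\alpha h^\beta=1,\ q^{\alpha'}h^{\beta'}=t,\ [q,h]=1\rangle
\]
(with $\alpha\beta'-\beta\alpha'=q$), and then asserts that this is a presentation of $\Z\oplus\Z/q\Z$. Eliminating the redundant generator $t$ this is just $\Z^2/\langle(\alpha,\beta)\rangle$, and Smith normal form gives the answer since $\gcd(\alpha,\beta)=q$. By contrast, you derive the orbifold fundamental group from first principles by building the developing map $\C\times\R\to T_{\alpha,\beta}$ and identifying the deck group as $\Z^2/\langle(\alpha,q)\rangle$; your alternative via the product decomposition $D^2(q)\times S^1$ is shorter still. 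The advantage of the paper's approach is brevity, leaning on the standard Seifert-type presentation for orbifold fundamental groups; the advantage of yours is that it is self-contained and makes the geometric origin of the $\Z/q\Z$ factor transparent as the isotropy of the core, whereas the paper's presentation leaves that identification implicit.
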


\begin{proof}
Since $\pi^{orb}_1(T_{\alpha,\beta})$ can be presented as $\langle
q,h,t\ \vert\ q^\alpha h^\beta=1,\ q^{\alpha'}h^{\beta'}=t,\
[q,h]=1\rangle$ where $\alpha\beta'-\beta\alpha'=q$, it is not hard to
show that that is indeed a presentation of $\Z\oplus\Z/q\Z$.
\end{proof}

This can then be used to show the following relating orbifold homology
of $M$ and $M_K$ defined above which is Proposition 5.2 of \cite{myarticle}

\begin{prop}\label{homologycomparison}
Let $K\in M$ be an orbifold curve of degree $q$, then
$\num{H^{orb}_1(M)}=q\num{H^{orb}_1(M_k)}$.
\end{prop}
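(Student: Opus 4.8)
The plan is to compare $M$ and $M_K$ by the Mayer--Vietoris-type decomposition coming from splitting off a tubular neighborhood of $K$. Write $M = M' \cup_T N_{(\alpha,\beta)}$ and $M_K = M' \cup_T N_{(\alpha',\beta')}$, where $T$ is the common boundary torus and the two gluings agree because $\beta/\alpha = \beta'/\alpha'$. First I would apply van Kampen for orbifold fundamental groups to each side: $\pi_1^{orb}(M)$ is the amalgamated product $\pi_1^{orb}(M') *_{\pi_1(T)} \pi_1^{orb}(N_{(\alpha,\beta)})$, and similarly for $M_K$ with $\pi_1^{orb}(N_{(\alpha',\beta')}) = \pi_1(N_{(\alpha',\beta')}) = \Z$ since $\gcd(\alpha',\beta')=1$. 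Abelianizing, $H_1^{orb}$ of each total space is the pushout (in abelian groups) of $H_1^{orb}(M') \leftarrow H_1(T) \rightarrow H_1^{orb}(N_\bullet)$, i.e. the cokernel of the difference map $H_1(T) \to H_1^{orb}(M') \oplus H_1^{orb}(N_\bullet)$.

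The key computational input is the previous Proposition: $H_1^{orb}(N_{(\alpha,\beta)}) = H_1^{orb}(T_{\alpha,\beta}) = \Z \oplus \Z/q\Z$, whereas $H_1^{orb}(N_{(\alpha',\beta')}) = \Z$. So the two pushouts differ only in that the solid-torus factor has an extra $\Z/q\Z$ summand in the $M$ case. Concretely, I would pick compatible bases: on $T = \partial M'$ take the meridian-longitude basis so that the fiber class $h$ and the section/base class are expressed via the Seifert invariants; the generator of $H_1(T)$ that bounds in $N_{(\alpha',\beta')}$ is the curve of slope $\beta'/\alpha' = \beta/\alpha$, which is exactly the curve used in gluing $N_{(\alpha,\beta)}$ as well. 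Hence the image of $H_1(T)$ inside the $M'$-factor is identical in both computations; the only change is in the second factor. Using the presentation $\langle q,h,t \mid q^\alpha h^\beta = 1,\ q^{\alpha'} h^{\beta'} = t,\ [q,h]=1\rangle$ from the proof of the previous Proposition, one sees that passing from $N_{(\alpha',\beta')}$ to $N_{(\alpha,\beta)}$ corresponds precisely to adjoining the relation-generator $q$ with $q^q = (\text{something that already maps to } 0)$, contributing the factor of $q$ to the order. A clean way to finish is a short five-term exact sequence argument: both $H_1^{orb}(M)$ and $H_1^{orb}(M_K)$ sit in exact sequences $H_1(T) \to H_1^{orb}(M') \oplus H_1^{orb}(N) \to H_1^{orb}(\text{total}) \to 0$ with $H_1(T) \cong \Z^2$ free; comparing the two, the map on the $M'$-summand is unchanged, so $\num{H_1^{orb}(M)} / \num{H_1^{orb}(M_K)}$ equals $\num{\operatorname{coker}(\Z^2 \to \Z \oplus \Z/q\Z)} / \num{\operatorname{coker}(\Z^2 \to \Z)}$ computed with the same first component of the map, and this ratio is $q$.

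The main obstacle I anticipate is bookkeeping with the orbifold fundamental group of $N_{(\alpha,\beta)}$ and making the van Kampen argument rigorous in the orbifold category: one must be careful that $T$ is a genuine (smooth) torus disjoint from the orbifold locus, so $\pi_1^{orb}(T) = \pi_1(T) = \Z^2$ and ordinary van Kampen applies on the glued piece, while the orbifold structure is entirely concentrated in $N_{(\alpha,\beta)}$. Once that is set up, identifying the slope-$\beta/\alpha$ curve consistently on both sides and tracking which generator of $H_1(T)$ dies is straightforward but must be done carefully to avoid an off-by-$\gcd$ error. Since this is exactly Proposition 5.2 of \cite{myarticle}, I would also simply cite that computation and sketch only the van Kampen/pushout skeleton above, noting that the extra factor $q = \gcd(\alpha,\beta)$ is precisely the order of the torsion subgroup $\Z/q\Z$ of $H_1^{orb}$ of the solid-torus neighborhood that disappears when one passes to $M_K$.
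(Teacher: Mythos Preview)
The paper does not actually prove this proposition: it merely states it, remarks that the preceding computation of $H_1^{orb}(T_{\alpha,\beta})\cong \Z\oplus\Z/q\Z$ ``can then be used to show'' it, and cites Proposition~5.2 of \cite{myarticle} for the argument. Your van~Kampen/pushout sketch is precisely the natural way to turn that remark into a proof, and you even close by proposing to cite the same reference. So there is nothing substantive to compare---you have written out what the paper only gestures at.

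One small technical remark on your final step. The inclusion $T\hookrightarrow N_{(\alpha,\beta)}$ is surjective on $H_1^{orb}$, because the complement of the singular core in $N_{(\alpha,\beta)}$ deformation retracts onto $T$ and $\pi_1^{orb}$ is a quotient of $\pi_1$ of that complement. Using this, both pushouts can be rewritten as $H_1^{orb}(M')/\phi(\ker\psi_i)$, where $\psi_1\colon\Z^2\to\Z\oplus\Z/q$ and $\psi_2\colon\Z^2\to\Z$ are the maps to the solid-torus pieces; one has $\ker\psi_1=q\cdot\ker\psi_2$, and the generator $m$ of $\ker\psi_2$ is the meridian curve. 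The assertion that the ratio of orders is exactly $q$ then amounts to $\phi(m)$ having infinite order in $H_1^{orb}(M')$, which is forced by the standing hypothesis that $M_K$ (equivalently $M$) is a rational homology sphere. Your phrase ``this ratio is $q$'' is correct, but this is the step where an off-by-$\gcd$ error would creep in if one were careless, so it is worth saying explicitly.
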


It then follows that
$\num{H^{orb}_1(M)}=Q\num{H^{orb}_1(\overline{M})}= Q\num{H_1(M)}$,
where $Q=\prod_K q_K$ with the product is taking over all orbifold curves
$K$ of orbifold degree $q_K$. It should be mentioned that our
$H^{orb}_1(M)$ is part of a homology theory for orbifolds in general
see \cite{orbifold}.

\section{Splice Diagrams}\label{splicediagrams}

A \emph{splice diagram} is a tree with no vertices of valence $2$, which is
decorated by signs on vertices who have valence greater than $2$, we
call such vertices \emph{nodes}, and non negative integer weights on
edges adjacent to nodes. We will call vertices of valence $1$ for
\emph{leaves}, and will in general not distinguish between a leaf and
the edge connecting the leaf to a node.  

We will now explain how to assign a splice diagram as an invariant to
any graph orbifold. Let $M$ be a rational homology sphere ($\Q$HS) graph
orbifold, we the construct the splice diagram $\Gamma(M)$, by first
taking a node for each piece of the JSJ-decomposition of $M$, the
connect two nodes if the corresponding $S^1$-fibered pieces of the
decomposition are 
glued to create $M$. We will in general not distinguish between a node
in $\Gamma(M)$ and the corresponding $S^1$-fibered piece. This will
result in a tree since the pieces of the $JSJ$-decomposition of $M$
corresponds to the pieces of the $JSJ$-decomposition of the underlying
manifold $\overline{M}$, and since $M$ is a $\Q$HS it follows from
the comments after Proposition \ref{homologycomparison}, that
$\overline{M}$ is a $\Q$HS, and hence its $JSJ$-decomposition gives a
tree like structure. We add a leaf to a node for each singular fiber
of the node. 

Next we want to add the decorations. First the signs at a node $v$ is
going to be the sign of the linking number of two non singular fibers
at $v$, for precise definition of this see section $2$ of
\cite{myarticle}, there we do only define it for manifolds, but the
definition carries over to orbifolds, even though one has to be
careful since linking number is not going to be symmetric any more in
the case of orbifolds. One can calculate the linking number in
the underlying manifold, and it follows from Lemma
\ref{dotproductcomparison} that signs will be the same. 

Last thing to define is the edge weights. Let $v$ be a node and $e$ an
edge adjacent to $v$, then we will do the following construction to
define the edge weight $d_{ve}$ adjacent to $v$ on $e$. Let $T\subset
M$ be the torus corresponding to the edge $e$, cut $M$ along $T$ and
let $M'$ be the piece not containing $v$. Let
$M_{ve}=M'\bigcup_T(S^1\times D^2)$ by gluing a meridian of the solid
torus to the image of a fiber of $v$ in $\partial M'$. Then $d_{ve}=
\num{H^{orb}_1(M_{ve})}$.    

The above definition defines a splice diagram where all weight at
leaves are greater than $1$, we call such splice diagram
\emph{reduced}, we will not always assume that our splice diagram are
reduced, i.e.\ we will sometimes allow weights at leaf to be $1$. A leaf
of weight $1$ will correspond to a non singular fiber, so if one has a
non reduced splice diagram of a graph orbifold $M$, one gets the reduced
splice diagram of $M$ by removing all leaves of weight $1$.  

A general edge between two nodes in a splice diagram looks like
$$\splicediag{8}{30}{
  &&&&\\
  \Vdots&\overtag\Circ {v_0} {8pt}\lineto[ul]_(.5){n_{01}}
  \lineto[dl]^(.5){n_{0k_0}}
  \lineto[rr]^(.25){r_0}^(.75){r_1}&& \overtag\Circ{v_1}{8pt}
  \lineto[ur]^(.5){n_{11}}
  \lineto[dr]_(.5){n_{1k_1}}&\Vdots\\
  &&&&\hbox to 0 pt{~.\hss} }$$ 
to such an edge we associate a number called \emph{the edge
  determinant}, which we define as $r_0r_1- \epsilon_0\epsilon_1
\big(\prod_{i=1}^{k_0}n_{oi}\big)\big(\prod_{j=1}^{k_1}n_{1j}\big)$,
where $\epsilon_i$ is the sign on the $i$'th node. This
number is important since it helps determining which graph manifolds
arises as singularity links, by the following theorem from
\cite{myarticle}

\begin{thm}
Let $M$ be a $\Q$HS graph manifold with splice diagram
$\Gamma(M)$. Then $M$ is the link of an isolated complex surface
singularity if and only if, there are no negative signs on nodes and
all edge determinants of $\Gamma(M)$ are positive. 
\end{thm}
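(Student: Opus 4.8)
The plan is to reduce the statement to the corresponding theorem of Neumann and Wahl for integer homology sphere singularity links (and its generalizations by the author for $\Q$HS links), using the fact that the splice diagram and its edge determinants are invariants that can be computed in the underlying manifold. The key observation is the following: for a $\Q$HS graph \emph{manifold} $M$ the splice diagram defined here via $H_1^{orb}$ coincides with the classical splice diagram, since $M$ is smooth and $H_1^{orb}(M)=H_1(M)$; so the content is entirely a statement about $\Q$HS graph manifolds and can be attacked with the classical tools. I would first recall the plumbing calculus description: $M$ bounds a negative-definite plumbed $4$-manifold $X$ precisely when certain intersection-form conditions hold, and Neumann's theorem characterizes singularity links among $\Q$HS graph manifolds as those bounding such negative-definite plumbings (with additional normal-form conditions).

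The main steps, in order, would be: (1) translate the sign and edge-determinant conditions on $\Gamma(M)$ into conditions on the Seifert invariants of the JSJ pieces of $M$ and on the way they are glued; here I would use Lemma \ref{dotproductcomparison} to relate linking numbers in $M$ to linking numbers, hence self-intersection data, in the underlying manifold, and I would use the edge-weight definition $d_{ve}=\num{H_1^{orb}(M_{ve})}$ to read off the relevant orders of homology groups. (2) Show the ``only if'' direction: if $M$ is a singularity link, then $M$ bounds a negative-definite plumbing $X$; the intersection form being negative definite forces all the edge determinants to be positive and all node signs to be positive, by a local computation on each edge of the plumbing graph (the edge determinant is essentially a $2\times 2$ minor of the plumbing intersection matrix obtained by collapsing the chain along that edge, and negative definiteness of the minor gives positivity of the determinant). (3) Show the ``if'' direction: given positive signs and positive edge determinants, construct explicitly a plumbing graph from $\Gamma(M)$ — insert chains of $(-2)$- and other negatively-weighted vertices along each splice edge using continued-fraction expansions determined by the edge weights — and verify that the resulting intersection form is negative definite, so that $M$ bounds a negative-definite plumbing and is therefore a singularity link. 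This is the classical ``splice diagram to resolution graph'' construction of Neumann and Wahl, which I would cite and adapt; the adaptation needed is merely bookkeeping of orbifold degrees, which drop out because we are working in the smooth case.

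The hard part will be step (3): making the continued-fraction/plumbing construction work uniformly and proving negative definiteness of the constructed intersection matrix from the hypothesis that all edge determinants are positive. One must be careful that the edge weights $d_{ve}$ coming from $H_1^{orb}$ really do give admissible continued-fraction data (positive and with the right congruence/linking conditions to close up the plumbing), and that positivity of every edge determinant is exactly the condition needed to avoid a non-negative eigenvalue — this is where the quadratic-form inductive argument of Neumann--Wahl (peeling leaves and nodes one at a time, tracking the effect on the form) gets used. I expect no genuinely new difficulty beyond that of the manifold case treated in \cite{myarticle}, since, as noted, the theorem as stated is literally about $\Q$HS graph manifolds; the role of this theorem in the paper is to record the manifold case before the orbifold generalization, so the proof can safely quote the corresponding result and its proof from \cite{myarticle} and indicate the translation of hypotheses.
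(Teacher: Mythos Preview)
The paper does not prove this theorem at all: it is stated with the preamble ``by the following theorem from \cite{myarticle}'' and no proof is given in the present paper. So there is nothing to compare your argument to here beyond a bare citation.

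Your proposal eventually reaches the right conclusion --- that one should simply quote \cite{myarticle} --- but the route you take to get there is more elaborate than necessary and slightly off in emphasis. Since the theorem is about graph \emph{manifolds}, the orbifold machinery you invoke (Lemma~\ref{dotproductcomparison}, the distinction between $H_1^{orb}$ and $H_1$, the ``bookkeeping of orbifold degrees'') is irrelevant: for a manifold all orbifold degrees are $1$ and $H_1^{orb}=H_1$, so there is no translation to perform. Your sketch of the actual content of the proof in \cite{myarticle} --- negative-definiteness of the plumbing form for the ``only if'' direction, and a continued-fraction construction of a negative-definite plumbing for the ``if'' direction --- is a reasonable outline of how that result is proved, but it belongs in \cite{myarticle}, not here. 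For the purposes of this paper, the entire ``proof'' is the citation.
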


Not all combinatorial splice diagram arises as the splice diagram of
a graph orbifold. We will next introduce an important condition which
the splice diagram of any graph orbifold satisfy. Let $\Gamma$ be a
splice diagram, and let $v$ and $w$ be two vertices in $\Gamma$. Then one
defines the \emph{linking number} $l_{vw}$ of $v$ and $w$  to be the
product of all edge weights adjacent to but not on the shortest path
from $v$ to $w$. Similarly $l'_{vw}$ is defined the the same way except
we exclude the weights adjacent to $v$ and $w$. Let $e$ be an edge at
$v$ then $\Gamma_{ve}$ is the connected subgraph of $\Gamma$ one get
by removing $v$, which includes the edge $e$. We then define
\emph{ideal generator} at $v$ in direction of $e$ $\overline{d}_{ve}$,
to be the positive generator of the following ideal in $\Z$
\begin{align*}
\langle l'_{vw}\ \vert\ w\text{ is a leaf of }\Gamma_{ve}\rangle 
\end{align*} 

\begin{defn} A splice diagram $\Gamma$ satisfy \emph{the ideal
    condition} if every edge weight $d_{ve}$ is divisible by the
  corresponding ideal generator $\overline{d}_{ve}$
\end{defn}
In section 12.1 in \cite{neumannandwahl2} Neumann and Wahl proves that
every splice diagram coming from a singularity link satisfy the ideal
condition, and the proof also works in the general setting of any
graph orbifold. So satisfying the ideal condition is a necessary
condition for a splice diagram to come from graph orbifold, in the one
node case the ideal condition is void and we show in section
\ref{constructingorbifolds} the in the two node case the ideal
condition is also sufficient, unfortunately this is not the case if
the splice diagram have more than two nodes, and the lack of
understanding which splice diagram that in general arises as
invariant of graph orbifolds is one of the reasons we have not been
able to extend the main result to more than two nodes splice
diagrams. 

\section{Splice Diagram Equations}\label{spliceeq} 

We are in general going to be be interested in splice diagram
satisfying the following stronger condition than the ideal condition. 
\begin{defn}
A splice diagram $\Gamma$ is said to satisfy \emph{the semigroup
  condition} if for every node $v$ and edge $e$ at $v$. The edge
weight lies in the following semigroup of $\N$:
\begin{align*}
d_{ve}\in\N\langle l'_{vw}\ \vert\ w\text{ a leaf in
}\Gamma_{ve}\rangle
\end{align*}
\end{defn}
The semigroup is only interesting if one has no negative signs at
nodes, so we will assume that splice diagrams satisfying the semigroup
satisfy this. The semigroup condition is strictly stronger than the
ideal condition for splice diagram with more than one node. 

If $M$ satisfy the semigroup condition, then given a node $v$ and
adjacent edge $e$, one can write the corresponding edge weight as
\begin{align}
d_{ve}=\sum_{w\text{ is a leaf of
  }\Gamma_{ve}}\alpha_{vw}l'_{vw},\label{semigroupcoef}
\end{align}
where the $\alpha_{vw}$'s are non negative integers. We call the
collection of $\alpha_{vw}$ \emph{semigroup coefficients} of
$d_{ve}$. It not hard to see that \eqref{semigroupcoef} is equivalent to
\begin{align}
d_v=\sum_{w\text{ is a leaf of
  }\Gamma_{ve}}\alpha_{vw}l_{vw},
\end{align}
where $d_v$ is the product of all edge weights adjacent to $v$.

From now on we will assume that $\Gamma$ satisfy the semigroup
condition, we then associate to each leaf $w$ of $\Gamma$ a variable
$z_w$, and have the following definitions.

\begin{defn}
Let $v$ be a node of $\Gamma$ and assume $\Gamma$ has leaves
$w_1,\dots,w_n$, then a $v$-\emph{weighting}, or 
$v$-\emph{filtration}, of $\C [z_{w_1},\dots,z_{w_n}]$ is to associate
the weight $l_{vw_i}$ to $z_{w_i}$.
\end{defn}

\begin{defn}
Let $v$ be a node of $\Gamma$ an $e$ an adjacent edge, then an
\emph{admissible monomial} associated to $v$ and $e$, is a monomial on
the form $M_{ve}=\prod_w z_w^{\alpha_w}$, where the product is taken
over all leaves in $\Gamma_{ve}$, and the $\alpha_w$'s is a choice of
semigroup coefficients of $d_{ve}$.  
\end{defn}
It is clear that each admissible monomial is $v$-weighted homogeneous,
of total $v$-weight $d_v$. 

\begin{defn}
Let $\Gamma$ with leaves $w_1,\dots,w_n$ be a splice diagram
satisfying the semigroup condition, then a set of \emph{splice diagram
  equations} for $\Gamma$, is the following set of equations in the
variables $z_{w_1},\dots, z_{w_n}$. 
\begin{align*}
\sum_ea_{vie}M_{ve}+H_{vi},\ v\text{ a node of valence }\delta_v,\
e\text{ an adjacent edge},\ i=1,\dots,\delta_v-2
\end{align*}
where
\begin{itemize}
\item $M_{ve}$ is an admissible monomial
\item for every $v$, all maximal minors of the
  $((\delta_v-2)\times\delta_v)$-matrix $(a_{vie})$ has full rank   
\item $H_{vi}$ is a convergent power series in the $z_{w_i}$'s all of
  whose monomials has $v$-weight higher that $d_v$.
\end{itemize}
This defines $n-2$ equations in $n$ variables, and the corresponding
subscheme $X(\Gamma)\subset \C^n$ is called a \emph{splice diagram
  surface singularity}.
\end{defn}

We have the first important result concerning splice diagram
equations, which is Theorem 2.6 of \cite{neumannandwahl2}.

\begin{thm}
Let $\Gamma$ be a splice diagram satisfy the semigroup condition, and
let $X=X(\Gamma)$ be an associated splice diagram surface
singularity. Then $X$ is a two-dimensional complete intersection,
with an isolated singularity at the origin. 
\end{thm}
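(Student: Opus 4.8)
I would argue by induction on the number $t$ of nodes of $\Gamma$. Write the $n-2$ defining equations as $f_{vi}=\sum_e a_{vie}M_{ve}+H_{vi}$ and put $X=X(\Gamma)=V(\{f_{vi}\})\subset\C^n$. Since codimension can only drop, proving that $X$ is a complete intersection of dimension $2$ amounts to showing $\dim X\le 2$, i.e.\ that the $f_{vi}$ form a regular sequence in $\C\{z_{w_1},\dots,z_{w_n}\}$; the isolated-singularity claim is then the separate assertion that the Jacobian matrix $(\partial f_{vi}/\partial z_{w_j})$ has rank $n-2$ at every point of $X\setminus\{0\}$. Both statements will be obtained by induction, using the $v$-weightings of Section \ref{spliceeq} together with the operation of splicing $\Gamma$ along an edge joining two nodes.

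\textbf{Base case.} If $t=1$ then $\Gamma$ is star-shaped with node $v$ and leaves $w_1,\dots,w_n$; the semigroup condition is vacuous and each admissible monomial is $M_{ve_j}=z_{w_j}^{\,d_{ve_j}}$, so the system is $\sum_j a_{ji}z_{w_j}^{d_{ve_j}}+H_i=0$ with $(a_{ji})$ having all maximal minors nonzero and each $H_i$ of $v$-weight $>d_v$. Deleting leaves of weight $1$ eliminates variables linearly, after which all exponents are $\ge 2$, and this is the classical statement that a generic system of Brieskorn--Pham type is an isolated complete intersection singularity of dimension $2$; one checks it directly, the Jacobian computation reducing to the maximal-minors hypothesis once the weighted homogeneity of the leading part is used to push the $H_i$ out of the way (a semicontinuity argument).

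\textbf{Inductive step.} For $t\ge 2$ choose an \emph{end node} $v_0$, meaning one whose incident edges all lead to leaves except for a single edge $e_0$; splicing $\Gamma$ along $e_0$ yields a star-shaped diagram $\Gamma_1$ about $v_0$ (with $e_0$ replaced by a leaf) and a splice diagram $\Gamma_2$ with $t-1$ nodes (with the far end of $e_0$ replaced by a leaf), both still satisfying the semigroup condition. The equations of $\Gamma$ are obtained from those of $\Gamma_1$ and $\Gamma_2$ by replacing each new-leaf monomial with the corresponding admissible monomial $M_{ve_0}$ of $\Gamma$ in the remaining variables. I would then analyse $X(\Gamma)$ according to how many of the leaf-variables at $v_0$ vanish: on the locus where at least one of them is nonzero one uses a $v_0$-equation to eliminate that variable and lands in (a deformation of) $X(\Gamma_2)$ times an affine space, where the inductive hypothesis applies; on the complementary locus the $\delta_{v_0}-2$ equations at $v_0$ already cut out a set of the expected codimension by the maximal-minors hypothesis. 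Comparing the $v_0$-filtration on $\Gamma$ with the filtrations carried by $\Gamma_2$ is what glues the two cases, and this is where the edge-determinant positivity and the semigroup (indeed the ideal) condition enter: they guarantee that the $v_0$-leading form of $f_{v'i}$ for $v'\ne v_0$ is the partial sum over the edges at $v'$ pointing away from $v_0$, with neither the remaining admissible monomial nor any monomial of $H_{v'i}$ descending to that weight.

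\textbf{Main obstacle.} The dimension count is the easier half; the real work is the rank-$(n-2)$ claim for the Jacobian on the deeper coordinate strata $\{z_w=0:w\in S\}$. These I would treat by a secondary induction on $|S|$: restricting the $f_{vi}$ to such a stratum produces, up to a higher-weight perturbation, the splice-diagram equations of a sub- or contracted diagram, so the inductive hypothesis yields the needed rank away from the origin of the stratum, while the Euler identities for the several $v$-weightings rule out new singular points along the loci where a whole family of leaf-coordinates vanishes. I expect this stratum-by-stratum Jacobian analysis — together with the bookkeeping of $v$-weights needed to pin down the leading forms in the inductive step — to be the genuinely delicate part; the combinatorics of which monomials dominate under a prescribed $v$-weighting is precisely where the hypotheses on $\Gamma$ are consumed.
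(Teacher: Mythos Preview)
The paper does not actually prove this theorem: it is quoted verbatim as Theorem~2.6 of \cite{neumannandwahl2} and no argument is given beyond that citation. There is therefore nothing in the present paper to compare your proposal against.

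That said, your outline is essentially the strategy Neumann and Wahl use in \cite{neumannandwahl2}: induction on the number of nodes by splicing at an end node, base case the Brieskorn--Pham (weighted-homogeneous star) situation, and the isolated-singularity assertion handled by a careful stratum-by-stratum Jacobian analysis governed by the $v$-weightings. The point you flag as the ``main obstacle'' --- controlling the leading forms of the $f_{v'i}$ for $v'\neq v_0$ under the $v_0$-filtration and showing the Jacobian has full rank on each coordinate stratum --- is exactly where the work lies in their proof as well, so your expectations are accurate. One small caveat: you invoke ``edge-determinant positivity'' in the inductive step, but the theorem as stated here assumes only the semigroup condition (with the standing convention that all node signs are positive); you should check that what you actually need is only the weight inequality $l_{v_0 w}\,$(for $w$ beyond $e_0$) $>d_{v_0}$-contributions from the near side, which is a consequence of the semigroup condition and the definition of admissible monomials rather than of any edge-determinant hypothesis.
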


\section{Plumbing and The Discriminant Group}\label{plumninganddisc}

From now $M$ will always be a graph orbifold with splice
diagram $\Gamma(M)$ and $\overline{M}$ is the underlying manifold,
hence $\Gamma(\overline{M})$ is equal to $\Gamma(M)$ whit any
edge-weight $d_{ve}$, replaced by $d_{ve}/o_1o_2\cdots o_n$, where
$o_1,\dots,o_n$ are the orbifold degrees of all orbifold curves in
$M_{ve}$ by Proposition \ref{homologycomparison},  

Now this of course do not always produce an reduced splice diagram of
$\overline{M}$, since there could be leafs with weight $1$. 

\begin{ex}\label{ex1}
Assume $M$ is a graph manifold with following splice diagram 
$$\splicediag{8}{30}{
 &&&&&&\Circ\\
 &\overtag\Circ {w_1} {8pt}&&&&\Circ \lineto[ur]^(.5){2} \lineto[dr]_(.5){6}&\\
 \Gamma(M)=& &\Circ \lineto[ul]_(.5){6}
  \lineto[dl]^(.5){3}
  \lineto[rr]^(.25){4330}^(.75){78}&& \Circ
  \lineto[ur]^(.25){102}^(.75){4755}
  \lineto[dr]_(.5){5}&&\overtag\Circ {w_2} {8pt}\\
&\Circ&&&&\overtag\Circ {w_3} {8pt}&
\hbox to 0 pt{~,\hss} }$$ 
and assume that at the leaves named $w_1,w_2$ and $w_3$, we have
orbifold curves of orbifold degrees $3,2$ and $5$ respectively. Then
$\overline{M}$ is going to have the following splice diagram
$$\splicediag{8}{30}{
 &&&&&&\Circ\\
 &\overtag\Circ {w_1} {8pt}&&&&\Circ \lineto[ur]^(.5){2} \lineto[dr]_(.5){3}&\\
 \Gamma(\overline{M})=& &\Circ \lineto[ul]_(.5){2}
  \lineto[dl]^(.5){3}
  \lineto[rr]^(.25){433}^(.75){26}&& \Circ
  \lineto[ur]^(.25){56}^(.75){317}
  \lineto[dr]_(.5){1}&&\overtag\Circ {w_2} {8pt}\\
&\Circ&&&&\overtag\Circ {w_3} {8pt}&
\hbox to 0 pt{~.\hss} }$$ 
If we want the reduced splice diagram of $\overline{M}$, we just
remove the leaf $w_3$, and the the central vertex becomes a valence
two vertex, and therefore has to be suppressed, and we get
$$\splicediag{8}{30}{ 
 &\overtag\Circ {w_1} {8pt}&&&&\Circ \\
 \Gamma'(\overline{M})=& &\Circ \lineto[ul]_(.5){2}
  \lineto[dl]^(.5){3}
  \lineto[rr]^(.25){433}^(.75){317}&& \Circ
  \lineto[ur]^(.5){2}
  \lineto[dr]_(.5){3}&\\
&\Circ&&&&\overtag\Circ {w_2} {8pt}
\hbox to 0 pt{~.\hss} }$$ 
\end{ex}

Let
$\Delta(\overline{M})$ be a plumbing diagram of $\overline{M}$, then
one can get a plumbing diagram $\Delta(M)$ for $M$, by adding a arrow
weighted with the orbifold degree at the corresponding vertex for each
orbifold curve of $M$. By blowing up if necessary, we can assume that
each vertex has at most one arrow attached. If $v$ is a vertex of
$\Delta(M)$, then the orbifold degree $o_v$ of $v$ is the orbifold
degree of the arrow attached to $v$, if no arrow is attached to $v$ then
$o_v=1$. When ever we use that notation $\Delta(\overline{M})$ and
$\Delta(M)$, we will assume that they are connected in this way. 

\begin{ex}
The graph orbifold $M$ from Example \ref{ex1} has the following
plumbing diagram
$$
\xymatrix@R=6pt@C=24pt@M=0pt@W=0pt@H=0pt{
&&&&&&&& \overtag{\Circ}{-2}{8pt}&\\
&&&&&&& \overtag{\Circ}{-4}{8pt}\lineto[ur] \lineto[dr]&&\\
& \overtag{\Circ}{-2}{8pt}\ar[ul]_{(3)} &&&&& \overtag{\Circ}{-3}{8pt}
\lineto[ur] && \overtag{\Circ}{-3}{8pt}\ar[dr]^{(2)}&\\
\Delta(M)=&&\overtag{\Circ}{-4}{8pt}
\lineto[r]\lineto[ul]\lineto[dl]& \overtag{\Circ}{-2}{8pt}\lineto[r] &
\overtag{\Circ}{-2}{8pt}\lineto[r] & 
\overtag{\Circ}{-4}{8pt}\lineto[ur]\lineto[dr]&&&&\\
& \overtag{\Circ}{-2}{8pt}\lineto[dl] &&&&&
\overtag{\Circ}{-1}{8pt}\ar[dr]^{(5)} &&&\\
\overtag{\Circ}{-2}{8pt}&&&&&&&&&.}$$
\end{ex}

Let $\overline{E}_v\subset\overline{X}$ be the curve where $\overline{X}$ is
an analytic surface with $\partial\overline{X}=\overline{M}$ corresponding to
the vertex $v\in\Delta(\overline{M})$ and $E_v$ the corresponding
surface in $X$ with $\partial X=M$. Then let 

\begin{align}
\E\colon &=\bigoplus_{v\in vert(\Delta(M))}\Z\cdot E_v
\end{align}
and
\begin{align}
\overline{\E}\colon &=\bigoplus_{v\in vert(\Delta(M))}\Z\cdot \overline{E}_v
\end{align} 
Now $\E$ and $\overline{\E}$ are the same as $\Z$-modules, but they
have different intersection pairings defined by
$\Delta(\overline{M})$ and $\Delta(M)$. Let $A(\overline{M})$ be the
intersection matrix of $\overline{\E}$ in the basis given by
$\overline{E}_v$. Then the intersection matrix for $\E$ in the basis
$E_v$ is given by the matrix $A(M)$ which is gotten from
$A(\overline{M})$ by multiplying each column, which corresponds to a
vertex $v$, with the orbifold degree $o_v$ of $v$ as defined
above. 

One can construct $\Gamma(M)$ from $\Delta(M)$ by suppressing all
vertices of valence $2$ in $\Delta(M)$ to get the tree structure, and
using the following propositions from \cite{myarticle} to get the
decorations. 

\begin{prop}
Let $v$ be a node in $\Gamma(M)$, and $e$ be a edge on that node. We
get the weight $d_{ve}$ on that edge by
$d_{ve}=\num{\det(-A(\Delta(M)_{ve}))}$, where $\Delta(M)_{ve}$ is is the
connected component of $\Delta(M)-e$ which does not contain $v$.
\end{prop}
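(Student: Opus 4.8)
The plan is to reduce the assertion to the determinant formula for a whole rational homology sphere graph orbifold, and then to recognise $M_{ve}$ as the graph orbifold carried by the subdiagram $\Delta(M)_{ve}$. Recall that by definition $d_{ve}=\num{H^{orb}_1(M_{ve})}$, so it suffices to compute this order.

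First I would prove the following general fact: if $N$ is a $\Q$HS graph orbifold with plumbing diagram $\Delta(N)$, underlying manifold $\overline N$, and intersection matrices $A(N)$, $A(\overline N)$ as set up above, then $\num{H^{orb}_1(N)}=\num{\det A(N)}$. Indeed, $A(N)$ is obtained from $A(\overline N)$ by multiplying the column of each vertex $w$ by its orbifold degree $o_w$, so multilinearity of the determinant gives $\num{\det A(N)}=\big(\prod_w o_w\big)\num{\det A(\overline N)}$, the product running over all vertices of $\Delta(N)$. A vertex has $o_w>1$ exactly when it carries an orbifold-curve arrow, and then $o_w=q_K$ for that curve $K$, so $\prod_w o_w=Q:=\prod_K q_K$. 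Since $\overline N$ is a $\Q$HS its plumbing intersection form is nonsingular and presents $H_1(\overline N)$, whence $\num{\det A(\overline N)}=\num{H_1(\overline N)}$; combining this with the identity $\num{H^{orb}_1(N)}=Q\,\num{H_1(\overline N)}$ from the discussion after Proposition \ref{homologycomparison} yields the claim.

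Next I would identify $M_{ve}$. Cutting $M$ along the torus $T$ associated to $e$ and capping the component $M'$ that does not contain $v$ by a solid torus whose meridian is the image of a fibre of $v$ yields a graph orbifold whose plumbing diagram is exactly $\Delta(M)_{ve}$: the orbifold curves of $M_{ve}$ are precisely the orbifold curves of $M$ lying in $M'$, i.e.\ the arrows of $\Delta(M)_{ve}$, and on the level of underlying manifolds the resulting plumbed $3$-manifold is $\overline{M_{ve}}=\partial X(\Delta(\overline M)_{ve})$, which is the usual plumbing-calculus statement behind the manifold case of the proposition (as in \cite{myarticle}). In particular $A(M_{ve})=A(\Delta(M)_{ve})$, and $M_{ve}$ is again a $\Q$HS because $M$ is. Feeding $N=M_{ve}$ into the first step then gives $d_{ve}=\num{H^{orb}_1(M_{ve})}=\num{\det A(\Delta(M)_{ve})}=\num{\det(-A(\Delta(M)_{ve}))}$, the last equality because negating a square matrix only changes the sign of its determinant.

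I expect the main obstacle to be the geometric identification in the second step: matching the orbifold operation ``cut $M$ along $T$ and cap $M'$ along the fibre of $v$'' with the combinatorial operation ``delete the edge $e$ and keep the component away from $v$''. One must check that the meridian of the capping solid torus is the curve in $T$ that the plumbing of $\Delta(M)_{ve}$ records as the boundary of a base-disc of the vertex adjacent to $e$ --- equivalently, that capping along the $v$-fibre rather than along the fibre of the near vertex is the operation whose boundary is $\partial X(\Delta(M)_{ve})$. This is precisely the manifold computation of \cite{myarticle}; here it only needs to be supplemented by the trivial remark that the orbifold structure is inherited unchanged, since $T$ is disjoint from all orbifold curves and the glued solid torus is smooth.
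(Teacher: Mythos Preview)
The paper does not prove this proposition; it is quoted from \cite{myarticle} without argument. Your proof is correct and is the natural way to deduce the orbifold statement from the manifold one: the column-scaling description of $A(M)$ gives $\num{\det A(N)}=\big(\prod_w o_w\big)\,\num{\det A(\overline N)}$, while the identity $\num{H^{orb}_1(N)}=\big(\prod_w o_w\big)\,\num{H_1(\overline N)}$ recorded after Proposition~\ref{homologycomparison} reduces everything to the manifold formula $\num{H_1(\overline{M}_{ve})}=\num{\det(-A(\Delta(\overline M)_{ve}))}$, which is precisely the content of the cited result. The only extra geometric input is your identification $\overline{M_{ve}}=\overline{M}_{ve}$, i.e.\ that cut-and-cap commutes with passing to the underlying manifold; this is unproblematic since the JSJ torus $T$ and the capping solid torus are disjoint from all orbifold curves. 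One minor remark: you need not assume separately that $M_{ve}$ is a $\Q$HS --- this falls out of the argument, as $\overline{M}_{ve}$ is a $\Q$HS by the manifold theory and hence $\num{H^{orb}_1(M_{ve})}$ is finite.
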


$$
\xymatrix@R=6pt@C=24pt@M=0pt@W=0pt@H=0pt{
&&&&&&&\\
\Delta(M)=&\Vdots&&\undertag{\overtag{\Circ}{a_{vv}}{8pt}}{v}{8pt}
\lineto[rr]\dashto[ull]\dashto[dll]&{}\undertag{}{e}{8pt}&
\overtag{\Circ}{a_{ww}}{8pt}\dashto[urr]\dashto[drr]&&\Vdots\\
&&&&&&&\\
&&&&&&{\hbox to 0pt{\hss$\underbrace{\hbox to 70pt{}}$\hss}}&\\
&&&&&&{\Delta(M)_{ve}}&}$$


\begin{prop}
Let $v$ be a node in $\Gamma(M)$. Then the sign $\epsilon$ at $v$ is
$\epsilon=-\sign(a_{vv})$, where $a_{vv}$ is the entry of 
$A(M)\inv$ corresponding to the node $v$. 
\end{prop}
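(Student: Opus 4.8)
The plan is to reduce the statement to the underlying graph manifold $\overline{M}$ and then to a standard linking-number computation in the plumbed $4$-manifold. First, since $M$ and $\overline{M}$ have the same underlying topological space, the two non-singular fibers at the node $v$ used to define $\epsilon$ are honest curves in $\overline{M}$, so their linking number — hence $\epsilon$ — may be computed in $\overline{M}$ (this is the content of the remark preceding the statement and of Lemma~\ref{dotproductcomparison}). On the matrix side, by construction $A(M)=A(\overline{M})\,\mathrm{diag}(o_1,\dots,o_n)$, so $A(M)\inv=\mathrm{diag}(o_1\inv,\dots,o_n\inv)\,A(\overline{M})\inv$ and therefore $(A(M)\inv)_{vv}=o_v\inv\,(A(\overline{M})\inv)_{vv}$; since every orbifold degree $o_v$ is a positive integer, $\sign\big((A(M)\inv)_{vv}\big)=\sign\big((A(\overline{M})\inv)_{vv}\big)$. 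Thus it suffices to prove $\epsilon=-\sign\big((A(\overline{M})\inv)_{vv}\big)$, i.e.\ the manifold case, where I may work with $\overline{X}$, $\partial\overline{X}=\overline{M}$, and the genuine symmetric intersection matrix $A(\overline{M})$.

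Next I would identify the non-singular fiber inside this picture. In $\overline{X}$ the curve $\overline{E}_v$ has a regular neighbourhood that is a disk bundle; over the complement of the circles where $\overline{E}_v$ meets the adjacent curves this restricts, on $\overline{M}$, to a circle bundle over a punctured sphere whose fiber is the meridian $m_v$ of $\overline{E}_v$. For a node $v$ this meridian is isotopic in $\overline{M}$ to a non-singular Seifert fiber $f_v$ of the JSJ piece containing $v$, and a second fiber $f_v'$ is obtained from $f_v$ by the push-off determined by two \emph{parallel meridian disks} $\hat{D}_v,\hat{D}_v'\subset\overline{X}$ meeting $\overline{E}_v$ transversally in two distinct points; this is precisely the canonical fibration framing. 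I would then compute $\lk_{\overline{M}}(f_v,f_v')$ using the rational dual class $\overline{E}_v^{\,*}:=\sum_w(A(\overline{M})\inv)_{vw}\,\overline{E}_w\in H_2(\overline{X};\Q)$, characterized by $\overline{E}_v^{\,*}\cdot\overline{E}_w=\delta_{vw}$: realizing $\hat{D}_v$ and $\hat{D}_v'$ as the ``meridian parts'' of two disjoint integral $2$-chains representing a common multiple of $\overline{E}_v^{\,*}$, the standard identity expressing a linking number in $\partial\overline{X}$ as minus an intersection number in $\overline{X}$ gives
\begin{align*}
\lk_{\overline{M}}(f_v,f_v')\;=\;-\,\overline{E}_v^{\,*}\cdot\overline{E}_v^{\,*}\;=\;-\!\!\sum_{w,u}(A(\overline{M})\inv)_{vw}(A(\overline{M})\inv)_{vu}\,(\overline{E}_w\cdot\overline{E}_u)\;=\;-(A(\overline{M})\inv)_{vv}.
\end{align*}
Taking signs and combining with the reduction above yields $\epsilon=\sign\big(\lk_{\overline{M}}(f_v,f_v')\big)=-\sign\big((A(\overline{M})\inv)_{vv}\big)=-\sign(a_{vv})$.

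The main obstacle is the bookkeeping in this middle step: because a non-singular fiber represents a torsion class, its linking number with a parallel copy is defined only once a framing is fixed, and the \emph{value} (not merely its class mod~$\Z$) enters the argument, so one must verify that the meridian-disk framing coming from $\overline{X}$ agrees with the canonical fibration framing used to define $\epsilon$, and that the boundary-to-intersection linking identity holds with exactly the sign written above for the chosen orientation conventions on $\overline{X}$ and $\overline{M}=\partial\overline{X}$. A convenient consistency check is the negative-definite case relevant to singularity links: there $\overline{E}_v\cdot\overline{E}_v<0$ forces $(A(\overline{M})\inv)_{vv}<0$, so the formula predicts $\epsilon=+1$, in agreement with the fact that the splice diagram of a singularity link has no negative node signs. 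I would either carry out this sign analysis directly or quote the corresponding linking-form computation for plumbed rational homology spheres from \cite{myarticle} or \cite{NeumannWahl3}.
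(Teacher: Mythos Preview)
The paper does not actually prove this proposition: it is stated without proof, together with the companion proposition on edge weights, under the sentence ``using the following propositions from \cite{myarticle} to get the decorations.'' So there is no argument in the present paper to compare against.

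That said, your approach is sound and is essentially the standard one. The reduction to $\overline{M}$ via Lemma~\ref{dotproductcomparison} is exactly right: $(A(M)\inv)_{vv}=o_v\inv(A(\overline{M})\inv)_{vv}$ with $o_v>0$, so signs agree, and the paper itself remarks that the node sign may be computed in the underlying manifold. In the manifold case your identification of a non-singular fiber with the meridian of $\overline{E}_v$ and the computation $\lk_{\overline{M}}(f_v,f_v')=-\overline{E}_v^{\,*}\!\cdot\overline{E}_v^{\,*}=-(A(\overline{M})\inv)_{vv}$ is the expected argument; the algebra $\overline{E}_v^{\,*}\!\cdot\overline{E}_v^{\,*}=(A\inv A A\inv)_{vv}=(A\inv)_{vv}$ is correct. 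The one genuine point you flag---that for a torsion class the self-linking depends on a framing, and one must match the plumbing/meridian framing with the Seifert-fibration framing used to define $\epsilon$---is real and must be addressed; this is precisely what the precise definition in Section~2 of \cite{myarticle} pins down, so citing it (as you propose) is appropriate. Your negative-definite sanity check is a good confirmation of the sign convention.
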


Let $\{e_v\}\subset\E^*=\hom(\E,\Z)\subset\E\otimes \Q$ and
$\{\tilde{e}_v\}\subset\overline{\E}^*= \hom(\overline{\E},\Z)
\subset\overline{\E}\otimes\Q$ be the dual bases.  

Define the \emph{discriminant group} as the finite abelian group
\begin{align*}
D(\Delta(M)):=\E^*/\E,
\end{align*}
the order of $D(\Delta(M))$ is $\det(M)\colon =\num{\det(-A(M))}$. The
intersection pairing of $\Delta(M)$ induces pairings of $\E\otimes\Q$
into $\Q$ and $D(M)$ into $\Q/\Z$. We then get the following facts
about discriminant groups from Section 5 of \cite{neumannandwahl2}.

\begin{prop}
Consider a collection $e_w$, where $w$ runs over all leaves of
$\Gamma(M)$. Then $D(M)$ is generated by the images of these $e_w$.
\end{prop}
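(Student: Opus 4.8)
The plan is to compute $D(M)=\E^*/\E$ directly from the plumbing graph $\Delta(M)$. Since $\{e_v\}$, indexed by the vertices $v$ of $\Delta(M)$, is a $\Z$-basis of $\E^*$, it suffices to show that for every vertex $v$ the class $[e_v]\in D(M)$ lies in the subgroup $G$ generated by the $[e_w]$ with $w$ a leaf of $\Gamma(M)$. Here I would use that $\Gamma(M)$ is obtained from $\Delta(M)$ by suppressing the valence-two vertices, so that the leaves of $\Gamma(M)$ are exactly the valence-one vertices of $\Delta(M)$, and that every orbifold curve of $M$, being a singular fibre of a node, corresponds to a leaf of $\Gamma(M)$; in particular the vertices of $\Delta(M)$ carrying orbifold degree $>1$ are valence-one vertices. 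Thus the statement is purely about the lattice $\E$ and its rescaled intersection form, and it is the orbifold analogue of the corresponding fact in Section 5 of \cite{neumannandwahl2}.

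First I would write down the relations. As $M$ is a $\Q$HS the intersection pairing is nondegenerate, and the image of $\E$ in $\E^*=\bigoplus_v\Z e_v$ is spanned by the vectors $\rho_v:=\sum_u (E_v\cdot E_u)\,e_u$, one for each vertex $v$. Because $\Delta(M)$ is a tree, $\rho_v$ involves only $e_v$ together with the $e_u$ for $u$ adjacent to $v$, the off-diagonal coefficients being $\pm1$ for $\overline M$ and those same entries rescaled by orbifold degrees for $M$. So $[\rho_v]=0$ in $D(M)$ for every $v$, and the proposition reduces to showing that these relations, together with the leaf classes, force $[e_v]\in G$ for all $v$. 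The mechanism is to use the tree structure to eliminate the non-leaf generators one by one, peeling the tree inward from its leaves: rooting $\Delta(M)$ at a leaf and proceeding by induction on the distance to the root, for a non-leaf vertex $v$ whose farther neighbours are already known to lie in $G$, I would use the relation $[\rho_{v'}]=0$ attached to a well-chosen neighbour $v'$ of $v$ in which the coefficient of $e_v$ is a unit, thereby writing $[e_v]$ as an integral combination of classes already in $G$. In the smooth case this is immediate, every off-diagonal entry being $\pm1$, and it reproduces the argument of Section 5 of \cite{neumannandwahl2}.

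The step I expect to be the main obstacle is making the elimination work in the genuinely orbifold setting. Once the orbifold degrees are present the coefficient by which $[e_v]$ is determined from a neighbour can fail to be a unit, and one must check that the rescaling never obstructs; equivalently, one must verify that a suitable maximal minor of the orbifold-rescaled intersection matrix --- the one whose rows are indexed by the non-leaf vertices --- can be chosen equal to $\pm1$. The decisive input is that orbifold degrees $>1$ occur only at the leaves of $\Delta(M)$, so the only non-unit coefficients sit in the relations $\rho_w$ and the columns attached to leaves; organising the elimination (that is, the choice of pivot relations) so that these are used only to absorb leaf classes, and confirming the resulting determinant is a unit, is the adaptation of the \cite{neumannandwahl2} computation, which I would carry out by triangularising the intersection matrix with respect to the tree order on the non-leaf vertices.
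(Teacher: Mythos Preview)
Your plan is sound and essentially reproduces the standard argument. The paper itself does not prove this proposition; it is one of several facts imported wholesale from Section~5 of \cite{neumannandwahl2} (see the sentence immediately preceding the proposition). So there is no proof in the paper to compare against, and what you outline is the natural way to supply one in the orbifold setting.

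The tree-peeling in your second paragraph is correct and is exactly the Neumann--Wahl argument: rooting at a leaf and processing vertices in decreasing distance from the root, for a non-leaf vertex $v$ you use the relation $\rho_{v'}$ at a child $v'$, in which $[e_v]$ appears with coefficient $A(M)_{v'v}=o_v\cdot(\pm1)=\pm1$ since $o_v=1$; the remaining terms involve $v'$ and its children, all strictly farther from the root and hence already in $G$ by induction. Your key observation --- that the orbifold degrees $>1$ sit only at the valence-one vertices of $\Delta(M)$, so the non-leaf columns of $A(M)$ coincide with those of $A(\overline{M})$ --- is precisely what makes the orbifold case reduce immediately to the manifold case and is the point the paper is tacitly relying on when it cites \cite{neumannandwahl2}.

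One small caution about your final paragraph: you describe the relevant maximal minor as ``the one whose rows are indexed by the non-leaf vertices''. That particular square submatrix (non-leaf rows and non-leaf columns) is just the intersection matrix of the interior of the plumbing tree and does \emph{not} have determinant $\pm1$ in general (already in a one-node star it is the single Euler number $a_{cc}$). The tree-peeling actually uses, for each non-leaf $u$, the row indexed by a chosen child of $u$ --- which may well be a leaf row --- and it is that choice of pivots that triangularises with unit diagonal. This does not affect the validity of your approach, since your second paragraph already gives the correct elimination scheme; just be careful when you write it up not to identify the pivot rows with the non-leaf rows.
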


\begin{prop}
Let $e_1,\dots,e_n$ be the elements of the dual basis of $\E^*$
corresponding to the leaves of $\Gamma(M)$. Then the homomorphism
$\E^*\to \Q^n$ defined by 
\begin{align*}
e\mapsto (e\cdot e_1,\dots,e\cdot e_n)
\end{align*}
induces an injection
\begin{align*}
D(M)\hookrightarrow (\Q/\Z)^n.
\end{align*}
In fact, each non-trivial element of $D(M)$ gives an element of
$(\Q/\Z)^n$ with at least two non-zero entries.
\end{prop}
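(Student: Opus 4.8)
The plan is to derive the proposition from the preceding one (the leaf classes generate $D(M)$) together with the non-degeneracy of the discriminant pairing; the embedding is then formal, and the ``two non-zero entries'' clause reduces to a mild strengthening of the preceding proposition.

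First the setup. Let $\{e_v\}$ be the dual basis of $\E^*$, so $e_v\cdot E_w=\delta_{vw}$; write $[x]\in D(M)=\E^*/\E$ for the class of $x\in\E^*$, let $e_1,\dots,e_n$ be the leaf duals, and let $\rho$ denote reduction modulo $\Z$. Since $e_i\in\E^*$ we have $a\cdot e_i\in\Z$ for $a\in\E$, so the map $e\mapsto(e\cdot e_1,\dots,e\cdot e_n)$ carries $\E$ into $\Z^n$ and descends to $\psi\colon D(M)\to(\Q/\Z)^n$, $\psi([x])=(\rho(x\cdot e_1),\dots,\rho(x\cdot e_n))$. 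The intersection form on $\E\otimes\Q$ induces a bi-additive pairing $b\colon D(M)\times D(M)\to\Q/\Z$, $b([x],[y])=\rho(x\cdot y)$, which is non-degenerate because $|\det(-A(M))|\ne 0$. Now $\ker\psi=\{[x]:b([x],[e_i])=0\text{ for }i=1,\dots,n\}$ is, by bi-additivity, the $b$-orthogonal of the subgroup of $D(M)$ generated by $[e_1],\dots,[e_n]$; by the preceding proposition that subgroup is all of $D(M)$, so $\ker\psi$ is the (left) radical of $b$, hence $0$, and $D(M)\hookrightarrow(\Q/\Z)^n$.

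For the refined statement I would prove the sharpening of the preceding proposition asserting that, for each leaf $j$, the classes $\{[e_i]:i\ne j\}$ already generate $D(M)$. Granting it, the same orthogonality argument shows $[x]\mapsto(b([x],[e_i]))_{i\ne j}$ is injective for every $j$, so a non-zero $[x]$ has some non-zero coordinate in position $i\ne j$; applying this with $j$ equal to the hypothetical unique non-zero coordinate yields a contradiction, hence $\psi([x])$ has at least two non-zero entries. To obtain the sharpened generation I would reexamine the proof of the preceding proposition (an induction peeling the tree vertex by vertex): it is enough to produce, at the node $v$ adjacent to $w_j$, a relation in $D(M)$ expressing $[e_j]$ through the leaf classes lying beyond the other edges at $v$, coming from the integrality of the fibre class of the Seifert piece $v$ (equivalently from the identity $d_v=\sum_w\alpha_{vw}l_{vw}$ at $v$).

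I expect two steps to require real work, and they are where the orbifold generality bites. First, $A(M)$ is not symmetric — it is $A(\overline{M})$ with columns rescaled by the orbifold degrees $o_v$ — so one must pin down which lattice $\E^*$ is meant and check that $b$ genuinely descends to $D(M)\times D(M)$ and is non-degenerate there; in the smooth case this is standard, and a robust alternative is to work in coordinates, where $x\cdot e_i$ is simply the $E_i$-coordinate of $x$, so the proposition becomes: an $x\in\E^*$ whose coordinates at all leaf vertices of $\Delta(M)$ are integral must lie in $\E$ — a statement one can try to reduce to $\overline M$ by tracking the diagonal rescaling, using that each vertex may be assumed to carry at most one orbifold arrow. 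Second, the drop-one-leaf generation lemma; this is the part I anticipate being the main obstacle, since it is exactly the combinatorial input that fails to be formal.
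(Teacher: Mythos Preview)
The paper does not give its own proof of this proposition: it is one of three facts about the discriminant group that are simply quoted from Section~5 of \cite{neumannandwahl2}, so there is no in-paper argument to compare against. Your outline is the standard one and is essentially how Neumann--Wahl argue in the manifold case: the induced pairing on $D(M)$ is non-degenerate, the leaf classes generate $D(M)$, hence the map to $(\Q/\Z)^n$ is injective; the refinement about two non-zero entries follows once one knows that any $n-1$ of the leaf classes already generate. Note that this last fact is precisely the content of the \emph{next} proposition the paper quotes (``any $n-1$ of these generate $D(M)$''), so rather than reproving it you may simply invoke it, as the paper does.

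Your reservations about the orbifold case are well taken and, in fact, are points the paper itself glosses over: the intersection matrix $A(M)$ is genuinely non-symmetric (it is $A(\overline{M})$ with columns scaled by the $o_v$), so one has to be careful about which side one dualises on, whether $b$ descends, and whether ``non-degenerate'' means left, right, or both. Your coordinate reformulation---an $x\in\E^*$ with integral coordinates at all leaf vertices lies in $\E$---is the cleanest way to sidestep this, and the comparison lemma $e_v\cdot e_{v'}=\overline{e}_v\cdot\overline{e}_{v'}/o_{v'}$ proved just after these propositions lets one transport the manifold result for $\overline{M}$ to $M$ directly; that is probably what the author has in mind when citing Neumann--Wahl for the orbifold statement.
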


We will embed $(\Q/\Z)^n$ into $\C^n$ via the following map
\begin{align*}
(\dots,r,\dots)\mapsto(\dots,\exp(2\pi ir),\dots)=:[\dots,r,\dots].
\end{align*}

\begin{prop}
Let $w_1,\dots,w_n$ be the leaves of $\Gamma(M)$, then the
discriminant group $D(M)$ is naturally represented by a diagonal
action on  $\C^n$, where the entries are $n$-tuples of
$\num{\det(M)}$'th roots of unity. Each leaf $w_j$ corresponds to an
element 
\begin{align*}
\big[e_{w_j}\cdot e_{w_1},\dots,e_{w_j}\cdot e_{w_n}\big] :=\big(\exp(2\pi
ie_{w_j}\cdot e_{w_1}),\dots,(\exp(2\pi ie_{w_j}\cdot e_{w_n})\big), 
\end{align*}
and any $n-1$ of these generate $D(M)$. The representation contains no
pseudoreflections, i.e.\ non-identity elements fixing a hyperplane.
\end{prop}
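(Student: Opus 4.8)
The plan is to assemble the three preceding propositions about $D(M)$ together with two standard facts: that $\num{D(M)}=\det(M)$, and that the discriminant form $D(M)\times D(M)\to\Q/\Z$ induced by the intersection pairing is non-degenerate — this holds because the intersection form on $\E$ is non-degenerate, $A(M)$ being invertible. Everything in the statement except the clause ``any $n-1$ of these generate $D(M)$'' will then be pure bookkeeping.

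First I would record the representation. By the previous proposition, the map $e\mapsto(e\cdot e_1,\dots,e\cdot e_n)$ induces an injection $\iota\colon D(M)\hookrightarrow(\Q/\Z)^n$; composing it with the embedding $(\dots,r,\dots)\mapsto[\dots,r,\dots]$ of $(\Q/\Z)^n$ into $\C^n$ realises $D(M)$ as a group of diagonal matrices in $\GL_n(\C)$, which is the asserted natural action. The leaf $w_j$ is sent to the dual basis element $e_{w_j}\in\E^*$, whose class has $\iota$-image $(e_{w_j}\cdot e_{w_1},\dots,e_{w_j}\cdot e_{w_n})$, i.e.\ the element $[e_{w_j}\cdot e_{w_1},\dots,e_{w_j}\cdot e_{w_n}]$ of $\C^n$, exactly as claimed. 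Since $\num{D(M)}=\det(M)$, every $g\in D(M)$ satisfies $\det(M)g=0$, so each coordinate $r$ of $\iota(g)$ lies in $\tfrac1{\det(M)}\Z/\Z$ and $\exp(2\pi ir)$ is a $\det(M)$-th root of unity.

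Next, the absence of pseudoreflections. A diagonal matrix $\mathrm{diag}(\zeta_1,\dots,\zeta_n)$ fixes a hyperplane pointwise precisely when exactly one $\zeta_i$ differs from $1$, equivalently when the corresponding tuple in $(\Q/\Z)^n$ has exactly one non-zero entry. But the previous proposition says that each non-trivial element of $D(M)$ has $\iota$-image with at least two non-zero entries, so its fixed locus has codimension at least $2$; hence the representation contains no pseudoreflections.

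Finally, that any $n-1$ of the leaf elements generate $D(M)$; this is the one step requiring an argument, and is where I expect the work to lie. Fix an index $k$ and suppose the subgroup $H$ generated by $\{[e_{w_j}]\ :\ j\neq k\}$ were proper in $D(M)$. By non-degeneracy of the discriminant form there is a non-zero $g\in D(M)$ with $g\cdot h=0$ for all $h\in H$, in particular $g\cdot[e_{w_j}]=0$ for every $j\neq k$. Since the $j$-th coordinate of $\iota(g)$ is precisely $g\cdot[e_{w_j}]$ modulo $\Z$, the tuple $\iota(g)$ has all coordinates zero except possibly the $k$-th — contradicting the fact just used, that a non-trivial element must have at least two non-zero coordinates. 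Hence $H=D(M)$. Thus the only genuine input is non-degeneracy of the discriminant form; granting that, the whole proposition is a repackaging of the earlier statements.
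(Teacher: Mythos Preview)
The paper does not supply its own proof of this proposition: it is one of three facts quoted from Section~5 of \cite{neumannandwahl2} and stated without argument. So there is nothing in the paper to compare your proof against.

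Your argument is correct and is essentially the natural way to deduce the statement from the two preceding propositions. The only point that deserves a word of justification is the non-degeneracy of the induced pairing on $D(M)$, which you invoke for the ``any $n-1$ generate'' clause. This is standard: writing $\E=\Z^N$ with form matrix $A=A(M)$, one has $\E^*=A^{-1}\Z^N$ inside $\E\otimes\Q$, and if $x\in\E^*$ satisfies $x\cdot y\in\Z$ for all $y\in\E^*$ then $x^T A(A^{-1}z)=x^Tz\in\Z$ for all $z\in\Z^N$, whence $x\in\Z^N=\E$. (In the orbifold setting $A(M)$ is not symmetric, so the pairing on $D(M)$ is not symmetric either; but the same computation gives one-sided non-degeneracy, and your argument only needs the direction you use: an element orthogonal on the left to all of $D(M)$ is trivial.) With that in hand, your orthogonality-plus-``at least two non-zero entries'' contradiction goes through exactly as written.
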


\section{Congruence Condition for Graph
  Orbifolds}\label{orbifoldcongruencecondition} 

The plumbing diagram $\Delta$ we use will be assumed to be
\emph{quasi-minimal}, this means that all weights on strings of
$\Delta$ have weights less that $-1$, unless the string consist of a
single vertex with weight $-1$. 

To any string
$$
\xymatrix@R=6pt@C=24pt@M=0pt@W=0pt@H=0pt{
\overtag{\Circ}{-b_1}{8pt}\lineto[r]&
\overtag{\Circ}{-b_2}{8pt}\dashto[rr]&& \overtag{\Circ}{-b_k}{8pt}
\hbox to 0pt {\hss}}
$$
in $\Delta$ one associates a continued fraction
\begin{align*}
n/p:=b_1-\cfrac{1}{b_{2}-\cfrac{1}{b_{3}-\dots}}.
\end{align*}
We associate $1/0$ to the empty string. We need the following standard
facts about this relation ship which proofs are not hard and can be
found many places.
\begin{lemma}
Reversing a string with continued fraction $n/p$ gives one with
continued fraction $n/p'$ with $pp'\equiv 1(\mod n)$. Moreover the
following relations holds:
$$
\xymatrix@R=6pt@C=24pt@M=0pt@W=0pt@H=0pt{
n=\det\big(\hspace{.25cm} \overtag{\Circ}{-b_1}{8pt}\lineto[r]&
\overtag{\Circ}{-b_2}{8pt}\dashto[rr]&&
\overtag{\Circ}{-b_k}{8pt}\hspace{.25cm} \big)\\
p=\det\big(\hspace{.25cm} \overtag{\Circ}{-b_2}{8pt}\lineto[r]&
\overtag{\Circ}{-b_3}{8pt}\dashto[rr]&&
\overtag{\Circ}{-b_k}{8pt}\hspace{.25cm} \big)\\
p'=\det\big(\hspace{.25cm} \overtag{\Circ}{-b_1}{8pt}\lineto[r]&
\overtag{\Circ}{-b_2}{8pt}\dashto[rr]&&
\overtag{\Circ}{-b_{k-1}}{8pt}\hspace{.25cm} \big)
\hbox to 0pt {~,\hss}}
$$
and the continued fraction in the last case is $p'/n'$ with
$n'=(pp'-1)/n$. 

For each $n/p\in[1,\infty]$, there is a unique quasi minimal string,
and in this case the continued fraction associated to the
reverse direction is $n/p'$ where $p'$ is the unique number satisfying
$p'\leq n$ and $pp'\equiv 1(\mod n)$.
\end{lemma}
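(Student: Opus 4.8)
The plan is to reduce everything to the tridiagonal determinants attached to strings, and then read off all three displayed identities, the congruence, and the last paragraph from a single product of $2\times2$ matrices. For a string with weights $-b_1,\dots,-b_k$ let $D(b_1,\dots,b_k)$ denote the determinant of the $k\times k$ tridiagonal matrix with the $b_i$ on the diagonal and $-1$ on the two adjacent diagonals (so $D(\emptyset)=1$, and a ``length $-1$'' range is assigned the value $0$). Expanding along the first, resp.\ last, row gives the recursions $D(b_1,\dots,b_k)=b_1D(b_2,\dots,b_k)-D(b_3,\dots,b_k)$ and $D(b_1,\dots,b_k)=b_kD(b_1,\dots,b_{k-1})-D(b_1,\dots,b_{k-2})$. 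From $n/p=b_1-1/(b_2-1/(\cdots))$ a one-line induction on $k$ using the first recursion gives $n=D(b_1,\dots,b_k)$ and $p=D(b_2,\dots,b_k)$, and that same recursion shows $\gcd(n,p)=1$, so these are numerator and denominator in lowest terms; this is the first and second displayed formula. Reversing the string conjugates its matrix by the order-reversing permutation, so $D(b_k,\dots,b_1)=D(b_1,\dots,b_k)=n$ and $D(b_{k-1},\dots,b_1)=D(b_1,\dots,b_{k-1})$, whence the reversed string has continued fraction $n/p'$ with $p'=D(b_1,\dots,b_{k-1})$; this is the third displayed formula.

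For the congruence I would put $M_i=\begin{pmatrix}b_i&-1\\1&0\end{pmatrix}$ and verify, by induction using the recursions above, that
\[
M_1M_2\cdots M_k=\begin{pmatrix}D(b_1,\dots,b_k)&-D(b_1,\dots,b_{k-1})\\ D(b_2,\dots,b_k)&-D(b_2,\dots,b_{k-1})\end{pmatrix}=\begin{pmatrix}n&-p'\\ p&-n'\end{pmatrix},
\]
where $n':=D(b_2,\dots,b_{k-1})$. Since each $\det M_i=1$, taking determinants gives $pp'-nn'=1$, i.e.\ $pp'\equiv1\pmod n$ and $n'=(pp'-1)/n$. Finally the string $-b_1,\dots,-b_{k-1}$ has continued fraction $D(b_1,\dots,b_{k-1})/D(b_2,\dots,b_{k-1})=p'/n'$, which is the remaining assertion.

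For the last paragraph I would induct on $n$. The value $n/p=\infty$ means $p=0$ and is realized only by the empty string, and $n/p=1$ means $n=p=1$, which, since any nonempty string with all $b_i\ge2$ has continued fraction $>1$ (immediate induction from $n/p=b_1-1/x'$ with $x'\ge1$ the value of the tail), is realized only by the single vertex of weight $-1$. If $n/p>1$ then any quasi-minimal string for it has all $b_i\ge2$, and then $b_1-1<n/p\le b_1$ forces $b_1=\lceil n/p\rceil$; removing the first vertex leaves a quasi-minimal string with continued fraction $1/(b_1-n/p)=p/(b_1p-n)$, whose numerator $p$ is strictly smaller than $n$, so the inductive hypothesis supplies both existence and uniqueness of that string and hence of the quasi-minimal string for $n/p$. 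By the previous paragraph the reversed string has continued fraction $n/p'$ with $pp'\equiv1\pmod n$; since $\gcd(p,n)=1$ the inverse of $p$ modulo $n$ has a unique representative in $\{1,\dots,n\}$, and the computation $p'=D(b_1,\dots,b_{k-1})<D(b_1,\dots,b_k)=n$ (the determinants of the initial sub-strings $-b_1,\dots,-b_j$ strictly increase in $j$ when all $b_i\ge2$) identifies $p'$ as precisely that representative, with $p'\le n$ and $p'=n$ only in the degenerate case $n=1$.

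The determinant recursions and the $2\times2$ induction are entirely routine, and the unique quasi-minimal expansion is the standard Hirzebruch--Jung algorithm; the only delicate point is keeping the degenerate cases straight --- the empty string, the lone $-1$ vertex, and the distinction between $p<n$ and $p\le n$ --- which is exactly what produces the closed interval $[1,\infty]$ and the bound $p'\le n$ in the statement. So the main obstacle is not a hard step but making all the sign and boundary conventions mutually consistent, so that $pp'-nn'=1$ falls out cleanly from $\det(M_1\cdots M_k)=1$.
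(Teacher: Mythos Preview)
Your argument is correct and complete. The paper itself does not actually prove this lemma: it is introduced with the remark that these are ``standard facts about this relationship which proofs are not hard and can be found many places,'' and no proof is given. What you have written is precisely one of the standard arguments the paper is alluding to --- the tridiagonal determinant recursion together with the $2\times2$ matrix product identity, followed by the Hirzebruch--Jung continued fraction algorithm for the uniqueness of the quasi-minimal string. Every step checks out, including the boundary cases (empty string, the single $-1$ vertex, and the equality case $p'=n$ when $n=1$), so there is nothing to add.
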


As we saw in last section the discriminant group $D(M)$ acts
diagonally on $\C^n$. Viewing the $z_{w_i}$'s as linear functions on
$\C^n$, $D(M)$ act naturally on $\C[z_{w_1},\dots,z_{w_n}]$, where $e$
acts on a monomial as
\begin{align*}
\prod z_{w_i}^{\alpha_{w_i}}\mapsto\Big[-\sum(e\cdot
e_{w_i})\Big]\prod z_{w_i}^{\alpha_{w_i}}. 
\end{align*}
This is the same as saying that the group transforms each monomial
according to the character
\begin{align*}
e\mapsto\exp\Big(-2\pi i\sum(e\cdot e_{w_j})\alpha_{w_j}\Big).
\end{align*}

Now we will return the setting of $\Gamma(M)$ satisfying the
semigroup conditions, so we have the notion of admissible monomial.

\begin{defn} Let $M$ be a graph orbifold with splice diagram
  $\Gamma(M)$ satisfy the semigroup condition. Let $\Delta(M)$ be
  a plumbing diagram of $M$, then $\Delta(M)$ satisfy the \emph{orbifold
  congruence condition} if for each node $v$ of $\Gamma(M)$ and
adjacent edge $e$, one can choose admissible monomials $M_{ve}$ so that
$D(M)$ transforms these monomials according to the same character.
\end{defn}

Notice that if $M$ is a manifold, then this definition is the same as
the definition of congruence condition (Definition 6.3) in
\cite{neumannandwahl2}. Next we write down explicit equation in terms
of $\Gamma(M)$ and $\Delta(M)$ for the congruence condition.

\begin{lemma}\label{dotproduct}
The matrix $(e_v\cdot e_{v'})$ where $v,v'\in\vx(\Delta(M)) $ is the
inverse matrix of $A(M)$, and the matrix $(\overline{e}_v\cdot
\overline{e}_{v'})$ is the inverse matrix of $A(\overline{M})$
\end{lemma}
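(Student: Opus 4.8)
The plan is to reduce everything to linear algebra about the intersection matrix. Recall that $A(M)$ is the intersection matrix of $\E$ in the basis $\{E_v\}$; since the plumbing diagram $\Delta(M)$ is a resolution graph of an isolated singularity (after allowing orbifold arrows), $A(M)$ is nonsingular, so the dual basis $\{e_v\}$ of $\E^*$ lives in $\E\otimes\Q$. The matrix of the $\Q$-valued pairing on $\E\otimes\Q$ in the basis $\{E_v\}$ is $A(M)$ itself, so the Gram matrix of the dual basis $\{e_v\}$ in that same pairing is $A(M)^{-1}$. Spelling this out: if $e_v=\sum_w c_{vw}E_w$ with $(c_{vw})=A(M)^{-1}$ (using that $e_v\cdot E_u=\delta_{vu}$ means $\sum_w c_{vw}(E_w\cdot E_u)=\delta_{vu}$, i.e.\ $C A(M)=I$), then $e_v\cdot e_{v'}=\sum_{w,u}c_{vw}c_{v'u}(E_w\cdot E_u)=\sum_w c_{vw}\delta_{v'w}=c_{vv'}$, which is the $(v,v')$ entry of $A(M)^{-1}$. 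So the matrix $(e_v\cdot e_{v'})$ equals $A(M)^{-1}$, and the identical computation with $\overline{E}_v$, $\overline{e}_v$ and $A(\overline{M})$ gives $(\overline{e}_v\cdot\overline{e}_{v'})=A(\overline{M})^{-1}$.

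Concretely I would carry out the steps in this order. First I would recall/invoke that $A(M)$ is invertible (standard for resolution/plumbing graphs of $\Q$HS links; it also follows from the discussion of the discriminant group, whose order is $|\det(-A(M))|$, being finite). Second, I would fix the expansion $e_v=\sum_w c_{vw}E_w$ of each dual basis vector in the original basis and translate the defining property $e_v\cdot E_u=\delta_{vu}$ into the matrix identity $C\,A(M)=I$, hence $C=A(M)^{-1}$. Third, I would compute $e_v\cdot e_{v'}$ by bilinearity and substitute, collapsing the double sum against one copy of $A(M)$ to land on the entry $c_{vv'}$, i.e.\ on $A(M)^{-1}_{vv'}$. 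Fourth, I would note that nothing used any special feature of the orbifold structure, so the same argument verbatim with bars everywhere gives the statement for $A(\overline{M})$.

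There is no real obstacle here; the only thing to be careful about is the convention for which side the matrix acts on (row vs.\ column conventions), so that $CA=I$ rather than $AC=I$ — but since $A(M)$ is symmetric this is harmless and $C=A(M)^{-1}$ either way. I would also remark, since the surrounding text multiplies columns of $A(\overline{M})$ by orbifold degrees to get $A(M)$, that this lemma is consistent with Proposition~\ref{homologycomparison}-type scaling but is logically independent of it: it is purely the statement that the Gram matrix of a dual basis is the inverse of the Gram matrix of the basis.
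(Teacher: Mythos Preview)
Your argument is correct and is exactly the ``elementary linear algebra'' the paper has in mind: expand the dual basis in the original basis, read off $C\,A(M)=I$ from $e_v\cdot E_u=\delta_{vu}$, and then collapse the bilinear expression for $e_v\cdot e_{v'}$ to the $(v,v')$ entry of $A(M)^{-1}$. The paper gives no further detail, so your proof is simply a spelled-out version of the same approach.
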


\begin{proof}
This follows from elementary linear algebra.
\end{proof}

\begin{lemma}\label{dotproductcomparison}
For any $v,v'\in\vx(\Delta(M)) $, we have that
\begin{align}
e_v\cdot e_{v'} &=\frac{\overline{e}_v\cdot \overline{e}_{v'}}{o_{v'}}.
\end{align}
\end{lemma}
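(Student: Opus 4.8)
The plan is to compare the two intersection matrices $A(M)$ and $A(\overline{M})$ directly and exploit the definition of the dual bases. Recall from the discussion after Example~\ref{ex1} that $A(M)$ is obtained from $A(\overline{M})$ by multiplying, for each vertex $v'$, the column corresponding to $v'$ by the orbifold degree $o_{v'}$. In matrix language this says $A(M) = A(\overline{M})\,D$, where $D = \operatorname{diag}(o_{v'})_{v'\in\vx(\Delta(M))}$ is the diagonal matrix of orbifold degrees. By Lemma~\ref{dotproduct}, the matrix $(e_v\cdot e_{v'})$ is $A(M)\inv$ and the matrix $(\overline{e}_v\cdot\overline{e}_{v'})$ is $A(\overline{M})\inv$, so the identity to prove is simply the entrywise reading of $A(M)\inv = D\inv A(\overline{M})\inv$, which is immediate from $A(M)=A(\overline{M})D$ by taking inverses (all three matrices are invertible since $M$ and $\overline{M}$ are $\Q$HS's, so their intersection forms are nondegenerate).

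First I would record the factorization $A(M) = A(\overline{M})\,D$ as a consequence of the column-scaling description of $A(M)$ already given in Section~\ref{plumninganddisc}; this is where one must be a little careful that "multiply the column corresponding to $v'$ by $o_{v'}$" indeed corresponds to right multiplication by the diagonal matrix $D$ (right multiplication scales columns, left multiplication scales rows), which is the standard convention. Second, I would invert: since $o_{v'}\neq 0$ for every $v'$, the matrix $D$ is invertible with $D\inv = \operatorname{diag}(1/o_{v'})$, and $A(M)\inv = D\inv A(\overline{M})\inv$. Third, I would read off the $(v,v')$ entry: the $(v,v')$ entry of $D\inv A(\overline{M})\inv$ is $(1/o_v)$ times... — and here is the one genuine subtlety worth checking — whether the division is by $o_v$ or by $o_{v'}$, i.e.\ whether $D$ multiplies on the left or the right. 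Since the claim in Lemma~\ref{dotproductcomparison} has $o_{v'}$ in the denominator, the bookkeeping must come out so that $A(M)\inv = A(\overline{M})\inv D\inv$, i.e.\ $A(M) = D A(\overline{M})$, meaning the row (not column) indexed by $v'$ is what gets scaled; one resolves this by noting the intersection matrix is symmetric, so scaling the $v'$-column and then re-symmetrizing, or equivalently using $A(\overline{M})$ symmetric to pass $D$ from one side to the other up to the transpose $(A(\overline{M})D)^{T} = D A(\overline{M})$, gives exactly $e_v\cdot e_{v'} = (\overline{e}_v\cdot\overline{e}_{v'})/o_{v'}$ after matching indices.

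The only real obstacle, then, is this index/side bookkeeping: making sure that the column-scaling that defines $A(M)$ from $A(\overline{M})$, once passed through the matrix inverse, puts $o_{v'}$ rather than $o_v$ in the denominator, and reconciling the fact that $A(\overline{M})D$ need not itself be symmetric even though both $A(M)$ and $A(\overline{M})$ are. The clean way to handle this is to observe that $e_v\cdot e_{v'} = \tilde e_v^{\,T} A(M)\inv \tilde e_{v'}$ is computed from $A(M)\inv$ directly (no symmetry needed for the entry formula itself), write $A(M)\inv = (A(\overline{M})D)\inv = D\inv A(\overline{M})\inv$, and then read the $(v,v')$ entry as $\sum_u (D\inv)_{vu}(A(\overline{M})\inv)_{uv'} = (1/o_v)(\overline{e}_v\cdot\overline{e}_{v'})$ — which has $o_v$, not $o_{v'}$; so in fact the column-scaling convention must be read as scaling rows, or equivalently $A(M) = D A(\overline{M})$ after accounting for symmetry of $A(\overline{M})$, giving $A(M)\inv = A(\overline{M})\inv D\inv$ and hence the stated $o_{v'}$. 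I would spell this single line out carefully and the rest is a one-line invocation of Lemma~\ref{dotproduct}.
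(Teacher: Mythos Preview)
Your approach --- factor $A(M)=A(\overline M)\,D$ with $D=\operatorname{diag}(o_{v})$ and invert --- is exactly what the paper does; the paper simply phrases the inversion entrywise via Cramer's rule (cofactors) rather than as a matrix identity, so the two arguments are the same in substance.

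Where your write-up goes wrong is the resolution of the $o_v$-versus-$o_{v'}$ bookkeeping. Your proposed fix, ``$A(\overline M)$ is symmetric, so pass $D$ to the other side,'' does not give what you claim: symmetry of $A(\overline M)$ yields $(A(\overline M)D)^T=D\,A(\overline M)$, i.e.\ $A(M)^T=D\,A(\overline M)$, which is \emph{not} $A(M)=D\,A(\overline M)$ since $A(M)$ itself is not symmetric. Taken literally, the stated conventions give $A(M)^{-1}=D^{-1}A(\overline M)^{-1}$ and hence the $(v,v')$ entry $(1/o_v)\,\overline e_v\cdot\overline e_{v'}$, with $o_v$ rather than $o_{v'}$. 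The paper's own proof arrives at $o_{v'}$ only because it writes Cramer's rule with the $(i,j)$ minor where the standard formula has the $(j,i)$ minor --- in effect it computes $(A(M)^{-1})^T$. So the discrepancy you flagged is genuine and reflects a convention ambiguity in how Lemma~\ref{dotproduct} identifies $(e_v\cdot e_{v'})$ with $A(M)^{-1}$ versus its transpose for this non-symmetric pairing; since $A(\overline M)^{-1}$ \emph{is} symmetric, either reading is internally consistent with the later lemmas, but you should state that explicitly rather than asserting $A(M)=D\,A(\overline M)$.
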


\begin{proof}
This follows from Cramer's rule, i.e. if $M_{ij}$ is the $i,j$'th minor
of $A(M)$ and $\overline{M}_{ij}$ is the $i,j$'th minor
of $A(\overline{M})$, then $M_{ij}$ is $\overline{M}_{ij}$ with each
column $l$ multiplied by the corresponding orbifold degree $o_l$,
hence $\det(M_{ij})=\big(\prod_{l\neq j}o_l\big)\det(\overline{M}_{ij})$ and
\begin{align*}
A(M)_{i,j}\inv&=\frac{1}{\det(A(M))}(-1)^{i+j}\det(M_{ij})\\ &=
\frac{\prod_{l\neq j}o_l}{\prod_{l}o_l}
\frac{1}{\det(A(\overline{M}))}(-1)^{i+j}\det(\overline{M}_{ij})   
= \frac{1}{o_j}A(\overline{M})_{i,j}\inv.
\end{align*}
\end{proof}

\begin{lemma}
If $v$ and $v'$ are different vertices in $\Delta(M)$, corresponding
to leaves of $\Gamma(M)$, then
\begin{align}
e_v\cdot e_{v'}=-\frac{o_vl_{vv'}}{d}.
\end{align}
\end{lemma}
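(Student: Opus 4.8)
The plan is to combine the two preceding lemmas with the known formula for the entries of $A(\overline{M})\inv$ in terms of linking numbers in the underlying manifold. By Lemma \ref{dotproductcomparison} we have $e_v\cdot e_{v'} = (\overline{e}_v\cdot \overline{e}_{v'})/o_{v'}$, and by Lemma \ref{dotproduct} the quantity $\overline{e}_v\cdot \overline{e}_{v'}$ is the $(v,v')$ entry of $A(\overline{M})\inv$. So everything reduces to a statement purely about the underlying manifold $\overline{M}$: namely that for two vertices of $\Delta(\overline{M})$ corresponding to leaves of $\Gamma(\overline{M})$, the corresponding entry of $A(\overline{M})\inv$ equals $-l_{vv'}/\det(\overline{M})$, where $l_{vv'}$ is the splice-diagram linking number and $\det(\overline{M}) = \num{\det(-A(\overline{M}))}$.

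First I would recall (or cite from \cite{myarticle} or \cite{neumannandwahl2}) the identity $\overline{e}_v\cdot\overline{e}_{v'} = -\overline{l}_{vv'}/\overline{d}$ for leaves $v,v'$ of $\Gamma(\overline{M})$, where $\overline{l}_{vv'}$ is the linking number computed in the splice diagram of the underlying manifold and $\overline{d}=\det(\overline{M})$; this is the manifold version of the statement and is essentially the content of the formula relating dual-basis pairings to linking numbers (the off-diagonal entries of the inverse plumbing matrix are, up to sign and the determinant, products of edge weights off the connecting path). Then I would substitute into Lemma \ref{dotproductcomparison} to get $e_v\cdot e_{v'} = -\overline{l}_{vv'}/(o_{v'}\,\overline{d})$. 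The remaining work is bookkeeping: express $\overline{l}_{vv'}$ and $\overline{d}$ in terms of the orbifold splice diagram $\Gamma(M)$. By Proposition \ref{homologycomparison} and the discussion at the start of Section \ref{plumninganddisc}, each edge weight $d_{ve}$ of $\Gamma(M)$ is $o_1\cdots o_n$ times the corresponding weight of $\Gamma(\overline{M})$, where the $o_i$ run over orbifold curves in $M_{ve}$; tracking which orbifold degrees appear in $\overline{l}_{vv'}$ versus in $\overline{d}$ (the full product of all edge weights at a vertex, summed appropriately) and in the factor $o_{v'}$, the orbifold degrees must cancel to leave exactly $e_v\cdot e_{v'} = -o_v l_{vv'}/d$, where $d = \det(M)$ and $l_{vv'}$ is now the linking number in $\Gamma(M)$.

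The main obstacle I expect is precisely this combinatorial reconciliation of orbifold degrees. One has to check that, when passing from $\Gamma(\overline{M})$-quantities to $\Gamma(M)$-quantities, the orbifold degree $o_v$ attached to the leaf $v$ itself survives in the numerator (it is not among the weights entering $l_{vv'}$, since $l_{vv'}$ excludes the edge weights on the path, and the leaf $v$'s own edge weight does lie on the path from $v$ to $v'$ — hence $o_v$ is not absorbed), while every other orbifold degree either cancels between the rescaled linking number and the rescaled determinant, or is accounted for by $o_{v'}$ in the denominator of Lemma \ref{dotproductcomparison}. Concretely: $d = \num{\det(-A(M))} = (\prod_{\text{all }K} o_K)\,\overline{d}$ by Proposition \ref{homologycomparison}, $o_{v'}$ contributes one factor, and the rescaling of $l_{vv'}$ versus $\overline{l}_{vv'}$ supplies the rest, leaving a single uncancelled $o_v$. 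I would set this up by writing all three objects as explicit products over edges/curves, then verifying the multiset identity of orbifold-degree factors on the two sides.

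\begin{proof}[Proof sketch]
By Lemma \ref{dotproduct} and Lemma \ref{dotproductcomparison},
\begin{align*}
e_v\cdot e_{v'} = \frac{\overline{e}_v\cdot\overline{e}_{v'}}{o_{v'}} = \frac{A(\overline{M})\inv_{v,v'}}{o_{v'}}.
\end{align*}
For leaves $v,v'$ of $\Gamma(\overline{M})$ one has $A(\overline{M})\inv_{v,v'} = -\overline{l}_{vv'}/\overline{d}$, where $\overline{l}_{vv'}$ is the linking number computed in $\Gamma(\overline{M})$ and $\overline{d}=\num{\det(-A(\overline{M}))}$; this is the manifold case and we take it as known. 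Hence $e_v\cdot e_{v'} = -\overline{l}_{vv'}/(o_{v'}\overline{d})$. It remains to translate $\overline{l}_{vv'}$ and $\overline{d}$ into $\Gamma(M)$-data. By Proposition \ref{homologycomparison} and the description of $\Gamma(\overline{M})$ at the start of Section \ref{plumninganddisc}, each edge weight of $\Gamma(M)$ is the corresponding weight of $\Gamma(\overline{M})$ times the product of the orbifold degrees of the orbifold curves lying on the far side of that edge. Writing $l_{vv'}$ and $d = \num{\det(-A(M))}$ as products of $\Gamma(M)$ edge weights and substituting, all orbifold degree factors cancel except a single factor $o_v$ (the orbifold degree at the leaf $v$, whose edge lies on the $v$–$v'$ path and therefore is not absorbed into $l_{vv'}$), and $o_{v'}$ in the denominator is absorbed likewise, yielding
\begin{align*}
e_v\cdot e_{v'} = -\frac{o_v l_{vv'}}{d}.
\end{align*}
\end{proof}
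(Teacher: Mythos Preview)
Your proposal is correct and follows essentially the same route as the paper: reduce to the manifold case via Lemma~\ref{dotproductcomparison}, invoke the known formula $\overline{e}_v\cdot\overline{e}_{v'}=-\overline{l}_{vv'}/\overline{d}$ (the paper cites Lemma~6.4 of \cite{neumannandwahl2}), and then use $l_{vv'}=\overline{l}_{vv'}\prod_{v''\neq v,v'}o_{v''}$ together with $d=\overline{d}\prod_{v''}o_{v''}$ to finish. The paper simply asserts these two rescaling identities and computes in one line, whereas you spell out the combinatorial justification more fully; the arguments are otherwise identical.
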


\begin{proof}
$\overline{e}_v\cdot \overline{e}_{v'}=-\frac{\tilde{l}_{vv'}}{\tilde{d}}$
by Lemma 6.4 in \cite{neumannandwahl2}, 
 but by the
definition of $\overline{M}$ it follows that
$l_{vv'}=\overline{l}_{vv'}\prod_{v''\neq v,v'}o_{v''}$ and
$d=\overline{d}\prod_{v''} o_{v''}$, so using lemma \ref{dotproduct} we
get
\begin{align*}
e_v\cdot e_{v'} &= \frac{\overline{e}_v\cdot \overline{e}_{v'}}{o_{v'}}=
-\frac{1}{o_{v'}}\frac{\overline{l}_{vv'}}{\overline{d}}
=-\frac{1}{o_{v'}}\frac{l_{vv'}}{\prod_{v''\neq
    v,v'}o_{v''}}\frac{\prod_{v''}o_{v''}}{d}=-\frac{o_vl_{vv'}}{d}.
\end{align*}
\end{proof}     

\begin{prop}
If $v$ is a node in $\Gamma(M)$ and $w$ is a adjacent leaf and the
continued fraction associated to the string in $\Delta(M)$ (and
$\Delta(\widetilde{M})$) is given by $(n_w/o_w)/p$, where $n_w$ is the
weight of the leaf and $o_w$ is the orbifold degree of the leaf. Let
$p'$ be the smallest positive integer such that $p'p=1\mod
(n_w/o_w)$. Let $\{n_i\}_{i=1}^k$ be the other weights adjacent to $v$
and let $N=\prod_{i=1}^kn_i$. Then
\begin{align}
e_w\cdot e_w &=-\frac{o_wN}{dn_w}-\frac{p'}{n_w}.
\end{align}
\end{prop}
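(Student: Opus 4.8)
The plan is to reduce the statement to a computation in the plumbing graph $\Delta(M)$, using the earlier lemmas to pass between $M$ and its underlying manifold $\overline{M}$. First I would apply Lemma \ref{dotproductcomparison}, which gives $e_w\cdot e_w = (\overline{e}_w\cdot \overline{e}_w)/o_w$, so it suffices to establish the corresponding formula for $\overline{e}_w\cdot\overline{e}_w$ in the manifold $\overline{M}$, namely $\overline{e}_w\cdot\overline{e}_w = -\overline{N}/(\overline{d}\,\overline{n}_w)-p'/\overline{n}_w$ where the barred quantities refer to $\overline{M}$. Indeed this last formula is exactly Lemma 6.4 (the diagonal case) of \cite{neumannandwahl2}, so in principle one only needs to track how the orbifold degrees rescale each ingredient and check the bookkeeping matches. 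The relations I would use are $l_{vv'}=\overline{l}_{vv'}\prod_{v''\neq v,v'}o_{v''}$ and $d=\overline{d}\prod_{v''}o_{v''}$ from the proof of the previous lemma, together with the fact that the leaf weight in $\Gamma(M)$ is $n_w$ while in $\Gamma(\overline{M})$ it is $n_w/o_w$ (this is the content of the opening paragraph of Section \ref{plumninganddisc} and Proposition \ref{homologycomparison}, since the string attached to the leaf $w$ in $\Delta(M)$ has its last column scaled by $o_w$).

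Alternatively, and perhaps more transparently, I would argue directly in $\Delta(M)$ via Lemma \ref{dotproduct}: $e_w\cdot e_w$ is the diagonal entry of $A(M)^{-1}$ corresponding to the leaf vertex $w$, which by Cramer's rule equals $\det(-A(M)_{ww})/\det(-A(M))$, where $A(M)_{ww}$ is the matrix obtained by deleting the row and column of $w$. Deleting $w$ disconnects $\Delta(M)$: one gets the string from $w$ to $v$ with the leaf-string part (without the terminal vertex) contributing its determinant, and the rest of the graph hanging off $v$. I would decompose $\det(-A(M)_{ww})$ along the node $v$, writing it as (determinant of everything beyond $v$ on the other branches) $\times$ (something) plus a cross term coming from the string back to $v$; the first term produces the $o_w N/(d n_w)$ piece after dividing by $d=\det(-A(M))$, and the second produces $p'/n_w$ where $p'$ enters as the determinant of the reversed truncated string via the Lemma on continued fractions. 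The continued fraction data $(n_w/o_w)/p$ and the reversed fraction $(n_w/o_w)/p'$ with $pp'\equiv 1 \pmod{n_w/o_w}$ is precisely what controls these subdeterminants, and the orbifold degree $o_w$ appears because the column of $A(M)$ at $w$ is $o_w$ times that of $A(\overline{M})$.

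The main obstacle I anticipate is the sign and normalization bookkeeping: making sure the $-1$ coming from the continued-fraction lemma (where $n,p,p'$ are determinants of \emph{negative} strings, hence positive) lines up with the $-p'/n_w$ in the claimed formula, and that $n_w$ rather than $n_w/o_w$ correctly appears in the two denominators (the first denominator is $n_w$ because $o_w$ cancels against a factor in $N$ or $d$, the second because the relevant sub-string determinant of $\Delta(M)$ carries an extra $o_w$ relative to $\Delta(\overline{M})$). Concretely, in the reduction-to-$\overline{M}$ approach one has $\overline{n}_w = n_w/o_w$, so $-\overline{N}/(\overline{d}\,\overline{n}_w)$ becomes, after multiplying through by the appropriate product of orbifold degrees and then dividing by $o_w$ per Lemma \ref{dotproductcomparison}, exactly $-o_w N/(d n_w)$; and $-p'/\overline{n}_w = -o_w p'/n_w$, which after the division by $o_w$ gives $-p'/n_w$. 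Checking that $N$, $d$, and the various $\prod o_{v''}$ factors combine correctly to leave precisely $o_w N/(d n_w)$ is the one genuinely fiddly step; everything else is a direct citation of \cite{neumannandwahl2} Lemma 6.4 together with the continued-fraction lemma and Lemmas \ref{dotproduct} and \ref{dotproductcomparison}.
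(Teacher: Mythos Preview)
Your first approach is exactly the paper's proof: apply Lemma~\ref{dotproductcomparison} to write $e_w\cdot e_w=(\overline{e}_w\cdot\overline{e}_w)/o_w$, invoke the known manifold formula from \cite{neumannandwahl2}, and then check that the orbifold-degree rescalings of $n_w$, $N$, and $d$ collapse to $-o_wN/(dn_w)-p'/n_w$. One small correction: the relevant result in \cite{neumannandwahl2} for the diagonal term $\overline{e}_w\cdot\overline{e}_w$ is Proposition~6.6, not Lemma~6.4 (the latter gives the off-diagonal formula used in the preceding lemma).
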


\begin{proof}
By using \ref{dotproduct} and the formula for $\overline{e}_w\cdot
\overline{e}_w$ given by proposition 6.6 in \cite{neumannandwahl2} we get
\begin{align*}
e_w\cdot e_w &=\frac{\overline{e}_w\cdot \overline{e}_w}{o_w}
=-\frac{1}{o_w}\Big(
\frac{n_w/o_w\prod_{i=1}^kn_i/o_i}{(n_w/o_w)^2d/o_i\prod_i o_i}+
\frac{p'}{n_w/o_w}\Big)=-\frac{o_wn_w\prod_in_i}{n_w^2d}-\frac{p'}{n_w}.
\end{align*}
\end{proof}

\begin{cor}
The class of $e_{w'}$, where $w'$ is a leaf, transforms the monomial
$\prod z_w^{\alpha_w}$ by multiplication by the root of unity 
\begin{align}
\Big[\sum_{w\neq
  w'}\alpha_w\frac{o_{w'}l_{ww'}}{\det(\Gamma)}-\alpha_{w'}e_{w'}\cdot
e_{w'}\Big].
\end{align}
\end{cor}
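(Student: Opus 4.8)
The plan is to read off the claim from the character formula for the $D(M)$-action established above, specialized to $e=e_{w'}$. Recall that a class $e\in\E^*$ multiplies the monomial $\prod_w z_w^{\alpha_w}$ (product over leaves) by $\exp\!\big(-2\pi i\sum_w (e\cdot e_w)\alpha_w\big)=\big[-\sum_w(e\cdot e_w)\alpha_w\big]$, so everything reduces to computing the numbers $e_{w'}\cdot e_w$.

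First I would isolate the diagonal term, writing the scalar attached to $e_{w'}$ as $\big[-\sum_{w\neq w'}(e_{w'}\cdot e_w)\alpha_w-\alpha_{w'}\,(e_{w'}\cdot e_{w'})\big]$. For each $w\neq w'$ both $w$ and $w'$ are leaves of $\Gamma(M)$, so the previous Lemma applies; the point requiring a little care is that the pairing $(e_v\cdot e_{v'})$ is \emph{not} symmetric, since $A(M)$ is obtained from the symmetric matrix $A(\overline{M})$ by scaling columns, so one must invoke the Lemma with $v=w'$ and $v'=w$ in that order, obtaining $e_{w'}\cdot e_w=-o_{w'}l_{w'w}/d$ with $d=\det(\Gamma)=\num{\det(-A(M))}$. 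Using that the linking number $l_{w'w}=l_{ww'}$ is symmetric (it is the product of the edge weights adjacent to, but not on, the path joining the two leaves), the off-diagonal part becomes $-\sum_{w\neq w'}(e_{w'}\cdot e_w)\alpha_w=\sum_{w\neq w'}\alpha_w\,o_{w'}l_{ww'}/\det(\Gamma)$, which is precisely the first sum in the assertion; the diagonal term is already $-\alpha_{w'}e_{w'}\cdot e_{w'}$, as wanted. Collecting the two pieces inside the bracket $[\,\cdot\,]$ gives the stated root of unity.

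So there is no substantial obstacle; the corollary is bookkeeping on top of the character formula and the preceding Lemma, the only subtlety being the asymmetry of the pairing just mentioned, together with the harmless identification of $d$ with $\det(\Gamma)$. If a fully explicit expression in terms of the decorations of $\Gamma(M)$ and the continued-fraction data of $\Delta(M)$ is desired, one may additionally substitute the value of $e_{w'}\cdot e_{w'}$ from the Proposition above (taking $w=w'$, with weight $n_{w'}$, orbifold degree $o_{w'}$, reverse continued fraction $p'$, and $N$ the product of the other weights at the node adjacent to $w'$), but this is not required for the statement as phrased.
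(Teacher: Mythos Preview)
Your argument is correct and is exactly the intended one: the paper states this corollary without proof because it follows immediately from the character formula for the $D(M)$-action together with the preceding Lemma computing $e_v\cdot e_{v'}$ for distinct leaves, and your write-up supplies precisely those details. Your remark about the asymmetry of the pairing (so that one must read the Lemma with $v=w'$, $v'=w$ to extract the factor $o_{w'}$) is the only point requiring care, and you handle it correctly.
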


We are now able to give formulas for checking the congruence
condition.

\begin{prop}
Let $\Delta$ be an orbifold plumbing diagram which splice diagram
$\Gamma$ satisfy the semigroup condition. Then the orbifold congruence
condition is
equivalent to the following: for every node $v$ and adjacent edge $e$,
there is an admissible monomial $M_{ve}=\prod z_w^{\alpha_w}$ where
$w$ is a leaf in $\Gamma_{ve}$, so that for every leaf $w'$ of
$\Gamma_{ve}$,
\begin{align}
\Big[\sum_{w\neq
  w'}\alpha_w\frac{o_{w'}l_{ww'}}{\det(\Gamma)}-\alpha_{w'}e_{w'}\cdot
e_{w'}\Big]=\Big[\frac{o_{w'}l_{vw'}}{\det(\Gamma)}\Big].
\end{align}
\end{prop}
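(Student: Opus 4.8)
The plan is to unwind both sides of the claimed identity into statements about how $D(M)$ acts on admissible monomials, and to show these coincide precisely when the orbifold congruence condition holds. Recall from the definition of the orbifold congruence condition that for each node $v$ and adjacent edge $e$ we may choose admissible monomials $M_{ve}$ so that $D(M)$ transforms all of them by a single character; the proposition reformulates this requirement coordinate-by-coordinate, using the generators $e_{w'}$ of $D(M)$ indexed by leaves $w'$. Since the leaf classes $e_{w'}$ generate $D(M)$ (by the proposition in Section~\ref{plumninganddisc}), it suffices to check the equality of characters on each generator $e_{w'}$, which is exactly what the displayed equation does.

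The first step is to compute the left-hand side. By the Corollary immediately preceding the statement, the class $e_{w'}$ transforms the monomial $M_{ve}=\prod_w z_w^{\alpha_w}$ (product over leaves $w$ of $\Gamma_{ve}$) by multiplication by the root of unity $\big[\sum_{w\neq w'}\alpha_w\, o_{w'}l_{ww'}/\det(\Gamma) - \alpha_{w'} e_{w'}\cdot e_{w'}\big]$; note that although the Corollary is stated for a sum over all leaves $w$ of $\Gamma$, only the leaves of $\Gamma_{ve}$ appear in $M_{ve}$, so the sum effectively restricts. This is precisely the left-hand side of the displayed equation, so that side is just ``the character by which $e_{w'}$ acts on $M_{ve}$.''

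The second step is to identify the right-hand side as the common character that \emph{must} be attained if the congruence condition is to hold. Here the key input is the semigroup relation $d_{ve}=\sum_{w}\alpha_{vw}l'_{vw}$ together with the observation (from the definitions) that for any two admissible monomials $M_{ve}$, $M_{ve'}$ at the same node $v$ with different adjacent edges, the difference of exponent vectors pairs trivially against $e_{w'}$ for leaves $w'$ not ``between'' them — so the only way a single character can work for all edges at $v$ is for it to equal the ``diagonal'' value $\big[o_{w'}l_{vw'}/\det(\Gamma)\big]$ coming from the node $v$ itself. Concretely, I would use the identity $d_{ve}=\sum_w\alpha_w l'_{vw}$ and the formula $e_{w'}\cdot e_{w'}=-o_{w'}N/(d\,n_{w'})-p'/n_{w'}$ from the Proposition above, substitute into the left-hand side, and simplify the combinatorial sum of the $l_{ww'}$ over leaves $w$ of $\Gamma_{ve}$. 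The splice-diagram identity $\sum_{w\in\Gamma_{ve}}\alpha_w l_{ww'} = \alpha_{w'}\cdot(\text{something})+ d_{ve}\cdot l_{vw'}/(\text{edge weight at }v)$ — which follows from the equivalent form of the semigroup condition, $d_v=\sum_w\alpha_{vw}l_{vw}$, and the multiplicativity of linking numbers along the path — is what collapses the left-hand side to the right-hand side.

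The main obstacle I expect is precisely this combinatorial bookkeeping: tracking how $l_{ww'}$ decomposes according to whether $w'$ lies in $\Gamma_{ve}$ or on the other side of $v$, and matching the $p'/n_{w'}$ term correctly against the $e_{w'}\cdot e_{w'}$ contribution so that the leaf-diagonal terms cancel and leave only the node term $o_{w'}l_{vw'}/\det(\Gamma)$. Once the left-hand side is shown to equal the right-hand side modulo~$\Z$ for the specific choice of admissible monomial, the logical equivalence is immediate: the congruence condition says a common character exists; the Corollary computes that character on each generator $e_{w'}$; and the displayed equation is the assertion that this common character is the explicitly-named one. Conversely if the displayed equations hold for some choice of admissible monomials, those monomials are transformed by the single character $\big[o_{\bullet}l_{v\bullet}/\det(\Gamma)\big]$, which is exactly the congruence condition.
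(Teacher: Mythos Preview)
Your overall strategy is the right one and matches the Neumann--Wahl argument the paper cites: identify the left side as the character value of $e_{w'}$ on $M_{ve}$, identify the right side as the value any common character is forced to take, and conclude the equivalence. But your ``second step'' and the anticipated ``main obstacle'' reveal a confusion that would derail the write-up. You seem to expect that after substituting the formula for $e_{w'}\cdot e_{w'}$ the $p'/n_{w'}$ term will cancel and the displayed equation will reduce to an \emph{identity}. It will not. The point is that the two cases behave completely differently:
\begin{itemize}
\item If $w'\notin\Gamma_{ve}$ then $\alpha_{w'}=0$, the self-intersection term is absent, and the identity $l_{ww'}=l_{vw}l_{vw'}/d_v$ (valid because the path $w\!\to\! w'$ passes through $v$) together with $d_v=\sum_{w\in\Gamma_{ve}}\alpha_w l_{vw}$ gives $\sum_{w}\alpha_w l_{ww'}=l_{vw'}$. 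So the displayed equation holds \emph{automatically}, for any admissible monomial.
\item If $w'\in\Gamma_{ve}$ then the $p'/n_{w'}$ term survives; it carries the plumbing data and does \emph{not} cancel against anything. Here the displayed equation is a genuine condition, not an identity.
\end{itemize}

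Once this dichotomy is clear, the equivalence is immediate and purely formal: the congruence condition asks that every $e_{w'}$ act on all $M_{ve}$ (with $e$ ranging over edges at $v$) by the same root of unity; since for each leaf $w'$ there is some edge $e'$ at $v$ with $w'\notin\Gamma_{ve'}$, that common value is forced to be $[\,o_{w'}l_{vw'}/\det(\Gamma)\,]$ by the automatic case; hence the congruence condition is equivalent to demanding this same value in the non-automatic case $w'\in\Gamma_{ve}$, which is exactly the displayed equation. So drop the plan to ``cancel'' the $p'/n_{w'}$ term, and instead make the automatic case the computational heart of the proof; the rest is logic.
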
 

\begin{proof}
The proof follows exactly as the prof given for the manifold case in
Proposition 6.8 in \cite{neumannandwahl2}. 
\end{proof}

We will now look closer to how the congruence condition looks in the
two node case. Let $\Gamma$ be the splice diagram
$$\splicediag{8}{30}{
  &&&&&\\
  &\overtag\Circ {w_{01}} {8pt}&&&&\overtag\Circ {w_{11}} {8pt}\\
  \Gamma=&\Vdots&\overtag\Circ {v_0} {8pt}\lineto[ul]_(.5){n_{01}}
  \lineto[dl]^(.5){n_{0k_0}}
  \lineto[rr]^(.25){r_0}^(.75){r_1}&& \overtag\Circ{v_1}{8pt}
  \lineto[ur]^(.5){n_{11}}
  \lineto[dr]_(.5){n_{1k_1}}&\Vdots\\
  &\undertag\Circ {w_{0k_0}} {4pt}&&&&\undertag\Circ {w_{1k_1}} {4pt}\\
&&&&&\hbox to 0 pt{~.\hss} }$$
and let the orbifold degree corresponding to $n_{ij}$ be $o_{ij}$. let
$\Delta$ be a plumbing diagram where we have made sure by blowing up
that all the vertices $v$ with $o_v\neq 1$ have valence one. Then
$\Delta$ is going to look like 

$$
\xymatrix@R=6pt@C=24pt@M=0pt@W=0pt@H=0pt{
&&&&&&&&&&&&\\
&\Circ\ar[ul]_{(o_{01})} &&&&&&&&&&\Circ \ar[ur]^{(o_{11})}&\\
\Delta=&\Vdots&&&\overtag{\Circ}{-b_0}{8pt}
\dashto[rrrr]\dashto[ulll]_{\overleftarrow{(\frac{n_{01}}{o_{01}})/p_{01}}}
\dashto[dlll]^{\overleftarrow{(\frac{n_{0k_0}}{o_{0k_0}})/p_{0k_0}}}
&&{}\overtag{}{\overrightarrow{n/p}}{16pt}&& 
\overtag{\Circ}{-b_1}{8pt}
\dashto[urrr]^{\overrightarrow{(\frac{n_{11}}{o_{11}})/p_{11}}} 
\dashto[drrr]_{\overrightarrow{(\frac{n_{1k_1}}{o_{1k_1}})/p_{1k_1}}}
&&&\Vdots&\\
&\Circ \ar[dl]^{(o_{0k_0})}&&&&&&&&&&\Circ \ar[dr]_{(o_{1k_1})}&\\
&&&&&&&&&&&&\hbox to 0pt {~.\hss}}
$$
Let $p_{ij}'$ be the unique positive integer satisfying
$p_{ij}p_{ij}'\equiv 1 \mod (n_{ij}/o_{ij})$ and $p_{ij}'<
(n_{ij}/o_{ij})$. Let $N_i=\prod_jn_{ij}$. Then
the equations for the congruence condition becomes
\begin{align*}
\Big[\frac{o_{lj}N_0N_1}{n_{lj}d}\Big]&=
\Big[\sum_{\substack{i=1 \\i\neq
  j}}^{k_l}\alpha_{w_{li}}\frac{o_{lj}N_lr_l}{n_{li}n_{lj}d}
-\alpha_{lj}e_{w_{lj}}\cdot e_{w_{lj}}\Big]\\
&=\Big[\sum_{\substack{i=1 \\i\neq
  j}}^{k_l}\alpha_{w_{li}}\frac{o_{lj}N_lr_l}{n_{li}n_{lj}d}-
\alpha_{lj}\big(-\frac{o_{lj}N_lr_l}{n_{lj}n_{lj}}-\frac{p'_{lj}}{n_{lj}}\big)
  \Big]\\ 
 &=\Big[\frac{o_{lj}r_0r_1}{n_{lj}d}+\frac{p'_{lj}}{n_{lj}}
  \Big] 
 \end{align*}
Where we use that
$r_{1-l}=\sum_{i=1}^{k_l}\alpha_{li}\frac{N_l}{n_{li}}$ be the choice
of admissible monomials. This equality is of course equivalent to 
\begin{align*}
\Big[0\Big] &= \Big[\frac{o_{lj}(r_0r_1-N_0N_1)}{n_{lj}d}+\frac{p'_{lj}}{n_{lj}}
  \Big] =  \Big[\frac{o_{lj}n}{n_{lj}}+\frac{p'_{lj}}{n_{lj}}
  \Big]
\end{align*}
where we use that $nd=r_0r_1-N_0N_1$ by the edge determinant
equation (Corollary 3.3 in \cite{myarticle}. Since $n_{lj}/o_{lj}$ is an integer, the equation becomes
equivalent to the following
\begin{align*}
\frac{\alpha_{lj}}{o_{lj}}p_{lj}'\equiv -n \mod (n_{lj}/o_{lj}).
\end{align*}
Using the definition of $p_{ij}'$ gives us the following set of equations
one need to check for the congruence to be satisfied
\begin{align}
\frac{\alpha_{lj}}{o_{lj}}\equiv -np_{lj} \mod
(n_{lj}/o_{lj}).\label{congruenceeq} 
\end{align} 

\section{Splice diagram equations determining the universal abelian
  cover}\label{universalabcoveroforbifolds} 

Let $M$ be a graph orbifold with splice diagram $\Gamma=\Gamma(M)$
satisfying the semigroup condition. Assume that $\{\alpha_w\}$ is a
choice of semigroup coefficients such that $\Delta(M)$ satisfying the
orbifold congruence condition.

\begin{defn} 
The set of semigroup coefficients $\{\alpha_w\}$ is said to be
\emph{$M$ reducible} if the orbifold degree $o_w$ divides $\alpha_w$ for
each leaf $w\in\Gamma$.
\end{defn}

Let $\overline{M}$ be the underlying manifold of $M$, then the splice
diagram $\overline{\Gamma}=\Gamma(\overline{M})$ is equal to $\Gamma$
except that given an edge $e\in \Gamma$ at a node $v$, then the
edge weight $\overline{d}_{ve}$ in $\overline{\Gamma}$ is given by
$\overline{d}_{ve}=\frac{d_{ve}}{\prod_{w\in\Gamma_{ve}}o_w}$ where
$d_{ve}$ is the edge weight in $\Gamma$. If $\{\alpha_w\}$ is a set on
$M$ reducible semigroup coefficients then $\{\overline{\alpha}_w\}$,
where $\overline{\alpha}_w=\alpha_w/o_w$, is a
set of semigroup coefficients for $\overline{M}$ since
\begin{align*}
\overline{d}_{ve}&=\frac{d_{ve}}{\prod_{w\in\Gamma_{ve}}o_w}=
\frac{\sum_{w\in\Gamma_{ve}}\alpha_wl_{vw}'}{\prod_{w\in\Gamma_{ve}}o_w}=
\sum_{w\in\Gamma_{ve}}\alpha_w/o_w\frac{l_{vw}'}{
  \prod_{\substack{w'\in\Gamma_{ve} \\ w'\neq w}}o_{w'}}= 
\sum_{w\in\overline{\Gamma}_{ve}}\overline{\alpha}_w\overline{l}_{vw}' 
\end{align*}

\begin{prop}
Assume that $\{\alpha_w\}$ satisfy the orbifold congruence condition
for $\Delta(M)$ then $\{\overline{\alpha}_w\}$ satisfy the congruence
condition for $\Delta(\overline{M})$.
\end{prop}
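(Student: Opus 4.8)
The plan is to show that the defining equation of the congruence condition for $\Delta(\overline{M})$ follows by dividing the orbifold congruence equation of $\Delta(M)$ by the orbifold degrees. Recall from the Proposition in Section \ref{orbifoldcongruencecondition} that, once $\Gamma$ satisfies the semigroup condition, the orbifold congruence condition for $\Delta(M)$ with the chosen $\{\alpha_w\}$ is equivalent to: for every node $v$, adjacent edge $e$, and every leaf $w'$ of $\Gamma_{ve}$,
\begin{align*}
\Big[\sum_{w\neq w'}\alpha_w\frac{o_{w'}l_{ww'}}{\det(\Gamma)}-\alpha_{w'}e_{w'}\cdot e_{w'}\Big]=\Big[\frac{o_{w'}l_{vw'}}{\det(\Gamma)}\Big].
\end{align*}
The corresponding equation for $\Delta(\overline{M})$ is the same statement with every $o$ set to $1$, with $l$ replaced by $\overline{l}$, with $\det(\Gamma)$ replaced by $\det(\overline{\Gamma})$, with $\alpha_w$ replaced by $\overline{\alpha}_w=\alpha_w/o_w$, and with $e_{w'}\cdot e_{w'}$ replaced by $\overline{e}_{w'}\cdot\overline{e}_{w'}$. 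So the task is purely to translate one identity of roots of unity into the other using the dictionary already assembled in the excerpt.

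First I would record the substitution rules. From the definitions in Section \ref{universalabcoveroforbifolds} we have $\overline{l}_{vw'}=l_{vw'}/\prod_{w''\in\Gamma_{ve},\,w''\neq w'}o_{w''}$ (and similarly $\overline{l}_{ww'}$ with the product over leaves distinct from $w,w'$), and $\det(\overline{\Gamma})=\det(\Gamma)/\prod_{w''}o_{w''}$ where the product is over all leaves; these are exactly the scaling relations used just above the Proposition, lifted from Proposition \ref{homologycomparison}. Then I would compute the right-hand side: dividing numerator and denominator,
\[
\frac{\overline{l}_{vw'}}{\det(\overline{\Gamma})}=\frac{l_{vw'}}{\det(\Gamma)}\cdot\frac{\prod_{w''}o_{w''}}{\prod_{w''\neq w'}o_{w''}}=\frac{o_{w'}l_{vw'}}{\det(\Gamma)},
\]
so the right-hand sides of the two congruence equations are \emph{literally the same rational number}. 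The same computation shows $\overline{\alpha}_w\,\overline{l}_{ww'}/\det(\overline{\Gamma})=(\alpha_w/o_w)\cdot o_w o_{w'} l_{ww'}/(\det(\Gamma)\prod?\,)$ — here I must be careful with exactly which factors of $o$ survive — and the point is to check it collapses to $\alpha_w\,o_{w'}l_{ww'}/\det(\Gamma)$. Finally, for the diagonal term, Lemma \ref{dotproductcomparison} gives $e_{w'}\cdot e_{w'}=(\overline{e}_{w'}\cdot\overline{e}_{w'})/o_{w'}$, hence $\alpha_{w'}e_{w'}\cdot e_{w'}=\alpha_{w'}(\overline{e}_{w'}\cdot\overline{e}_{w'})/o_{w'}=\overline{\alpha}_{w'}\,\overline{e}_{w'}\cdot\overline{e}_{w'}$, so this term is also unchanged. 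Assembling, the bracketed quantity on the left for $\overline{M}$ equals the bracketed quantity for $M$, and the right-hand sides agree, so the $\overline{M}$-equation holds exactly because the $M$-equation does.

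The main obstacle I expect is bookkeeping rather than conceptual: making sure that in each of the three terms (the sum over $w\neq w'$, the diagonal $\alpha_{w'}e_{w'}\cdot e_{w'}$ term, and the right-hand side $l_{vw'}/\det$ term) precisely the factor $o_{w'}$ — and no other orbifold degree — is the net discrepancy between the $M$-version and the $\overline{M}$-version, so that after multiplying the whole $\overline{M}$-equation through one recovers the $M$-equation verbatim. In particular one should note that $M$ reducibility of $\{\alpha_w\}$ is exactly what makes $\overline{\alpha}_w=\alpha_w/o_w$ an integer, so the expressions on the $\overline{M}$ side make sense and the substitution is legitimate. Once the scaling of each term is verified, the proof is a one-line implication: both sides of the $\overline{M}$-congruence equation coincide with the corresponding sides of the $\Delta(M)$-congruence equation, which holds by hypothesis. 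I would therefore write the proof as: \emph{substitute the relations $\overline{l}=l/\prod o$, $\det(\overline{\Gamma})=\det(\Gamma)/\prod o$, $\overline{\alpha}_w=\alpha_w/o_w$, and $\overline{e}_{w'}\cdot\overline{e}_{w'}=o_{w'}\,e_{w'}\cdot e_{w'}$ into the congruence equation for $\Delta(\overline{M})$ and observe it reduces termwise to the congruence equation for $\Delta(M)$, which holds by assumption.}
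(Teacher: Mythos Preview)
Your proposal is correct and follows essentially the same route as the paper's own proof: substitute the scaling relations $\overline{l}_{ww'}=l_{ww'}/\prod_{w''\neq w,w'}o_{w''}$, $\det(\overline{M})=\det(M)/\prod_{w''}o_{w''}$, $\overline{\alpha}_w=\alpha_w/o_w$, and $\overline{e}_{w'}\cdot\overline{e}_{w'}=o_{w'}\,e_{w'}\cdot e_{w'}$ into the left-hand side of the $\overline{M}$-congruence equation, watch it collapse termwise to the left-hand side of the $M$-equation, and then do the same for the right-hand side. One small slip to fix when you write it up: in your formula for $\overline{l}_{vw'}$ the product of $o_{w''}$ must run over \emph{all} leaves $w''\neq w'$, not just those in $\Gamma_{ve}$ (the weights at $v$ on edges other than $e$ also contribute to $l_{vw'}$); your displayed computation of $\overline{l}_{vw'}/\det(\overline{\Gamma})$ already uses the correct range, so this is only a wording inconsistency.
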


\begin{proof}
Let $v\in\overline{\Gamma}$ be a node and $w'\in\overline{\Gamma}_{ve}$
be a leaf, then
\begin{align*}
&\Big[\sum_{w\neq
  w'}\overline{\alpha}_w\frac{\overline{l}_{ww'}}{\det(\overline{M})}
-\overline{\alpha}_{w'}\overline{e}_{w'}\cdot\overline{e}_{w'}\Big]\\ &= 
\Big[\sum_{w\neq
  w'}\alpha_w/o_w\frac{l_{ww'}}{\prod_{v'\neq w,w'}o_{v'}}
\frac{\prod_{v'}o_{v'}}{\det(M)} 
-\frac{\alpha_{w'}}{o_{w'}}o_{w'}e_{w'}\cdot e_{w'}\Big]\\ &=
\Big[\sum_{w\neq
  w'}o_{w'}\alpha_w\frac{l_{ww'}}{\det(M)} 
-\alpha_{w'}e_{w'}\cdot e_{w'}\Big]\\
&=\Big[\frac{o_{w'}l_{vw'}}{\det(M)}\Big]=
\Big[\frac{o_{w'}\overline{l}_{vw'}\prod_{v'\neq w'}o_{v'}}{\det(M)
  \prod_{v'}o_{v'}}\Big]=\Big[\frac{\overline{l}_{vw'}}{\det(M)}\Big]. 
\end{align*}
Where we use that $\{\alpha_w\}$ satisfy the orbifold congruence
condition to get from the third line to the fourth.
\end{proof}

Associate to each leaf $w\in\Gamma$ a variable $z_w$, for each node
$v\in\Gamma$ let $\{\alpha_{vw}\}$ be a reducible choice of semigroup
coefficients satisfying the orbifold congruence condition, and let
$\overline{\alpha}_{vw}=\frac{\alpha_{vw}}{o_w}$. Let
$Z_{ve}=\prod_{w\in\Gamma_{ve}}z_w^{\alpha_{vw}}$ and
  $\overline{Z}_{ve}=\prod_{w\in\Gamma_{ve}}z_w^{\overline{\alpha}_{vw}}$. Let
    $t$ be the number of leaves of $\Gamma$ and Let
    $V$ be the subvariety of $\C^t$ defined by the equations
\begin{align*}
\Sigma_e a_{vie}Z_{ve}=0,\ v\ \text{a node,}\ i=1,\dots,\delta_v-2,
\end{align*}
where for all $v$ the maximal minors of the
$((\delta_v-2)\times\delta_v)$-matrix $(a_{vie})$ have maximal
rank. Likewise let $\overline{V}$ be the subvariety of $\C^t$ defined
by the equations 
\begin{align*}
\Sigma_e a_{vie}\overline{Z}_{ve}=0,\ v\ \text{a node,}\ i=1,\dots,\delta_v-2,
\end{align*} 
with the same choice of $a_{vie}$ as for $V$.
Define the map $\morf{F}{\C^t}{\C^t}$ by
\begin{align}
F(z_{w_1},\dots,z_{w_t})=(z_{w_1}^{o_{w_1}},\dots,z_{w_t}^{o_{w_t}}). 
\end{align} 
Then $F(V)=\overline{V}$, and $F$ is a branched abelian cover of
$\C^t$ with $\deg(F)=\prod_wo_w$, branched over $B=\bigcup_w\{z_w=0\ \vert\
o_w>1\}$. Now $\morf{F\vert_V}{V}{\overline{V}}$ is a branched abelian
cover, branched over $\overline{V}\bigcap B$.

Let $X$ be a singularity which has resolution $\Delta(\overline{M})$
and hence has link $\overline{M}$. Since the equations for
$\overline{V}$ satisfy the congruence conditions for
$\Delta(\overline{M})$, $\overline{V}$ defines the
universal abelian cover of $X$ branched over the origin by the work of
Neumann and Wahl \cite{neumannandwahl2}. Let
$\morf{\overline{\pi}}{\overline{V}}{X}$ denote the covering map, then
    $\deg(\overline{\pi})=\num{H_1(\overline{M})}
    =\frac{\num{H_1^{orb}(M)}}{\prod_w o_w}$. 

Now $\overline{M}$ embeds into $X$, so choose a small enough embedding
$i$ and let $L(\overline{V})=\overline{\pi}\inv(i(\overline{M}))$, then
$L(\overline{V})$ is homeomorphic to the link of $\overline{V}$. Let
$L(V)=F\inv(L(\overline{V}))$, then by choosing small enough
embedding $L(V)$ is homeomorphic to the link of $V$. Then the
restrictions of the maps $\morf{F\vert_{L(v)}}{L(V)}{L(\overline{V})}$
and
$\morf{i\inv\circ
  \overline{\pi}\vert_{L(\overline{V})}}{L(\overline{V})}{\overline{M}}$
are abelian covers, the first branched over $L(\overline{V})\bigcap
B$.

Let $\morf{f}{\overline{M}}{M}$ be the homeomorphism which identifies
$\overline{M}$ and $M$ as topological spaces. 

\begin{defn}  
Let $\morf{\pi}{L(V)}{M}$ be defined as $\pi=f\circ
i\inv\circ\overline{\pi}\vert_{L(\overline{V})}\circ F\vert_{L(V)}$.
\end{defn}
A priori $\pi$ is just a continues map, we next turn to prove that
$\pi$ is an abelian orbifold cover.
 
Let $K_w\subset M$ be the singular fiber corresponding to the leaf
$w\in\Gamma$. Let $S$ be the singular set of $M$, then
$S=\bigcup_{w:o_w>1}K_w$. We want to determine $\pi\inv(S)$. First
$f\inv(S)=S$ where we see $S$ as a subspace of $\overline{M}$, i.e.\
$S$ is the union of singular fibers of $\overline{M}$ for which
$o_w>1$, the last information is not detectable from
$\overline{M}$. Now $\overline{\pi}\vert_{L(\overline{V})}\inv(K_w)
=L(\overline{V})\bigcap\{z_w=0\}$ and hence 
\begin{align*}
(f\circ
i\inv\circ\overline{\pi}\vert_{L(\overline{V})})\inv(S)
=L(\overline{V})\bigcap\bigcup_{w:o_w>1}\{z_w=0\}=L(\overline{V})\bigcap
B 
\end{align*}
Now $\morf{f}{(\overline{M}-f\inv(S))}{(M-S)}$ is an abelian cover, since
 $f$ is the identity away from $S$,
 $\morf{\overline{\pi}\vert_{L(\overline{M})}}{(L(\overline{M})
   -B)}{(\overline{M}-f\inv(S))}$ is an abelian cover, since it is the
 restriction of an abelian cover to a union of fibers. And
 $\morf{F\vert_{L(V)}}{(L(V)-F\inv(B))}{(L(\overline{M})-B)}$ is an
 abelian cover, since it is the restriction of a branched abelian
 cover to the compliment of the branched locus. Hence
\begin{align*}
\morf{\pi\vert_{(L(V)-\pi\inv(S))}}{(L(V)-\pi\inv(S))}{(M-S)}
\end{align*}
is an abelian cover of degree $\deg(F)\deg(\overline{\pi})\deg(f)=(\prod_wo_w)
\frac{\num{H_1^{orb}(M)}}{\prod_w o_w}1=\num{H_1^{orb}(M)}$.

So next we turn to what happens in the neighborhood of an orbifold
curve.
 
\begin{prop}
Let $K_w\subset M$ be a orbifold curve of degree $o_w$ and let $N_{K_w}$ be a
solid torus neighborhood of $K_w$, then
$\morf{\pi\vert_{\pi\inv(N_{K_w})}}{\pi\inv(N_{K_w})}{N_{K_w}}$ is an
abelian orbifold cover of degree $\num{H_1^{orb}(M)}$.
\end{prop}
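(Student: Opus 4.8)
The plan is to analyze the covering map $\pi$ locally over a solid torus neighborhood $N_{K_w}$ of the orbifold curve $K_w$, decomposing $\pi$ into the three pieces $F$, $\overline{\pi}$, and $f$ just as in the global analysis on the complement of the singular set. Since over $N_{K_w}$ the essential structure is one-dimensional in the transverse direction, this reduces to understanding the composite of (i) the restriction of the Neumann--Wahl abelian cover $\overline{\pi}$ over the corresponding piece of $\overline{M}$, (ii) the branched cover $F$ which on the relevant coordinate is $z_w \mapsto z_w^{o_w}$, and (iii) the identification homeomorphism $f$ which, over $K_w$, implements the change of Seifert invariants from $(\alpha',\beta')$ to $(\alpha,\beta)$ with $\alpha = o_w \alpha'$, $\beta = o_w \beta'$.

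First I would set up coordinates: over $N_{K_w}$, the preimage $\overline{\pi}\inv(i(N_{K_w})) = L(\overline{V}) \cap \{\text{nbhd of } z_w = 0\}$ is a union of solid tori, on which $\overline{\pi}$ restricts to an unbranched abelian cover (a union of fibered solid tori mapping to a fibered solid torus) of some degree $d_1$; then $F$ restricted to $F\inv$ of this set is a branched abelian cover, branched over $\{z_w = 0\}$, of degree $o_w$ times (the degrees in the other, locally-constant coordinates), which near $K_w$ contributes a factor $o_w$; and $f$ on this neighborhood is the degree-one homeomorphism reinterpreting the Seifert data. The key point is that composing an honest $o_w$-fold branched cover of a solid torus with the reinterpretation $f$ produces precisely an orbifold cover of the orbifold solid torus $N_{(\alpha,\beta)}$ with $\gcd(\alpha,\beta) = o_w$: the branching of $F$ over the core $K_w$ is exactly what the orbifold structure on $N_{K_w}$ demands, so the composite is orbifold-étale. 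This is essentially the local statement dual to Proposition~\ref{homologycomparison}: the factor of $o_w$ lost by passing to $\overline{M}$ in homology is regained as the branching index.

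For the degree count, I would assemble $\deg(\pi\vert_{\pi\inv(N_{K_w})}) = \deg(F\vert)\deg(\overline{\pi}\vert)\deg(f\vert)$ and argue it equals $\num{H_1^{orb}(M)}$. One way: the degree of a covering restricted to a connected piece is the same as the global degree when the base piece is connected and the total space over it maps onto it, and $N_{K_w}$ is connected; so the local degree agrees with the global degree $\num{H_1^{orb}(M)}$ computed on $M - S$ in the discussion just before this proposition. Alternatively, and more in keeping with the orbifold viewpoint, one checks directly that $\pi_1^{orb}(N_{K_w}) = \pi_1^{orb}(T_{\alpha,\beta})$ surjects onto $H_1^{orb}(M)$ (the inclusion-induced map on orbifold $H_1$), using that $H_1^{orb}(N_{K_w}) = \Z \oplus \Z/o_w\Z$ by the Proposition preceding~\ref{homologycomparison}, and that the cover $\pi$ is by construction classified by the full $H_1^{orb}(M)$; this forces the local degree to be the full $\num{H_1^{orb}(M)}$.

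The main obstacle I anticipate is verifying rigorously that the composite $f \circ (\text{branched cover } F)$ is genuinely an orbifold covering near $K_w$ — i.e.\ that it is locally modeled on $\C \to \C/(\Z/\alpha\Z)$ in the transverse direction — rather than just a branched cover of underlying spaces. This requires carefully tracking the Seifert fibration data through $F$: one must check that the fibered solid torus structure upstairs, together with the $o_w$-fold branching, matches the quotient presentation $(U \times S^1)/(\Z/\alpha\Z)$ from the definition of graph orbifold, with the action $[k](z,s) = (e^{2\pi i k v/\alpha} z, e^{-2\pi i k q/\alpha} s)$ for the right $v, q = o_w$. Concretely this is a computation with continued fractions relating $p_w$, $p_w'$, and the gluing slopes — essentially checking that the numbers appearing in the congruence equation~\eqref{congruenceeq} are exactly those that make the local cover orbifold-étale — and that is where I would expect the bookkeeping to be delicate. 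Once the local model is pinned down, that it is a cover (not just a branched cover) and that its deck group is abelian both follow formally, since $F$, $\overline{\pi}$, and $f$ are each abelian (branched) covers and the orbifold structure absorbs precisely the branching of $F$.
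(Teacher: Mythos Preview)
Your decomposition $\pi = f\circ\overline{\pi}\circ F$ over $N_{K_w}$ and your identification of the crux --- showing that the $o_w$-fold branching of $F$ is exactly absorbed by the orbifold structure on $N_{K_w}$ --- match the paper's proof. The paper carries out precisely the verification you flag as the ``main obstacle'': it writes down the actions of $\Z/\alpha\Z$ on $D\times S^1$ downstairs and of $\Z/a\Z$ upstairs, uses the Neumann--Wahl local model for $\overline{\pi}$, observes that $F$ acts as $(x,s)\mapsto(x^{o_w},s)$ on each solid torus component, and then defines an explicit lift $\tilde{\pi}(x,s)=\overline{\pi}'(x^{o_w},s)$ together with a group homomorphism $\psi\colon\Z/a\Z\to\Z/\alpha\Z$, checking equivariance and commutativity of the required square directly.

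One point worth correcting in your expectations: the verification does \emph{not} involve the continued-fraction data $p_w,p_w'$ or the congruence equation~\eqref{congruenceeq}. Those were used earlier to ensure $\overline{\pi}$ is the Neumann--Wahl universal abelian cover; once that is in hand, the local orbifold-cover check is a pure computation with the rotation parameters $v_\alpha,v_{\alpha'},v_a,v_{a'}$ and the relations $\alpha'=\alpha/o_w$, $a=a'$, $v_{a'}\equiv o_w v_a\pmod{a'}$ forced by $F(x,s)=(x^{o_w},s)$. If you pursued the continued-fraction route you would find it collapses to this, but it is a detour.

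For the degree, your first argument (local degree equals global degree because $N_{K_w}$ is connected and the degree $\num{H_1^{orb}(M)}$ was already established on $M-S$) is valid and cleaner than what the paper does: the paper instead counts components of $\pi^{-1}(N_{K_w})$ explicitly and multiplies by the degree on each component, arriving at $\num{H_1(\overline{M})}\prod_{w'}o_{w'}=\num{H_1^{orb}(M)}$.
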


\begin{proof}
We both need to show that the exist open set $U\subset \R^3$ and
$D\subset\R^2$ and a branched abelian cover $\morf{\tilde{\pi}}{U}{D\times
  S^1}$ and a homomorphism $\morf{\psi}{\Z/a\Z}{\Z/\alpha\Z}$ such
that $\pi$ is equivariant with respect to $\psi$ and the following
diagram commutes 
\begin{align*}
\xymatrix@=5mm{
U\ar[rrr]^{\tilde{\pi}}\ar[d] &&& D\times S^1\ar[d]\\
U/(\Z/a\Z)\ar[d] &&& (D\times S^1)/(\Z/\alpha\Z)\ar[d]\\
\pi\inv(N_{K_w})\ar[rrr]^{\pi}&&& N_{K_w}.
}
\end{align*}
The vertical maps are the ones
given from the orbifold 
structures of $\pi\inv(N_{K_w})$ and $N_{K_w}$, i.e.\ the upper maps
are quotient maps and lower maps homeomorphisms. We can choose $D$ to
be a disk, and by choosing it small enough (i.e.\ choosing $N_{K_w}$
small enough) we get that $\pi\inv(N_{K_w})$ is a disjoint union of
solid torus neighborhoods $V_K$ of an singular fiber $K$, hence we
can choose $U$ to be a disjoint unions of $D\times S^1$. We now only
need to see that the diagrams commute for each of these component. 

Let $t_n=e^{2\pi i/n}$, then $\Z/\alpha\Z$ acts on $(D\times S^1)$ by
$(x,s)\to(t_\alpha^{v_{\alpha}} x,t_\alpha^{o_w} s)$ where
$\gcd(\alpha,v_\alpha)=1$.  
Likewise $\Z/a\Z$ acts on $(D\times S^1)$ by
$(x,s)\to(t_a^{v_a} x,t_a s)$. 

Let $N_k'=f\inv(N_{K_w})\subset \overline{M}$, and then 
$\overline{\pi}\inv(N_K')=\bigcup_{i=1}^m\overline{V}_K'$, where
$\overline{V}_K'$ is a solid torus neighborhood of a singular fiber
of degree $a'$. Let $\alpha'=\alpha/o_w$ and $\beta'=\beta/o_w$ then
the orbifold structure on $N_K'$ is given by $\Z/\alpha'\Z$ acting on
$D\times S^1$ by $(t_{\alpha'}^{v_{\alpha'}}x,t_{\alpha'}s)$, where
$v_{\alpha'}\beta'\equiv -1\mod \alpha'$ and this implies that
$v_\alpha\equiv v_{\alpha'}\mod\alpha'$. Since
$\overline{\pi}$ is an abelian orbifold cover there exist an
homomorphism $\morf{\psi'}{\Z/a'\Z}{\Z/\alpha'\Z}$, and an abelian
cover $\overline{\pi}'$ such that the following diagram commutes
\begin{align*}
\xymatrix@=5mm{
D\times S^1\ar[rrr]^{\overline{\pi}'}\ar[d] &&& D\times S^1\ar[d]\\
(D\times S^1)/(\Z/a'\Z)\ar[d] &&& (D\times S^1)/(\Z/\alpha'\Z)\ar[d]\\
\overline{V}_K'\ar[rrr]^{\overline{\pi}}&&& N_K',
} 
\end{align*}
and
$\psi(t_{a'})\overline{\pi}'(x,s)=\overline{\pi}'(t_{a'}^{v_{a'}}x,t_{a'}s)$. 


Now since $\overline{\pi}$ is an abelian cover, it follows that
$\alpha'=\lambda a'$ for some $\lambda$ and hence,
$\psi'(t_a')=t_{\alpha'}^\lambda$.

Now restricting $F$ to $V_K$ one sees that $F$ is a branched
cover and if $V_K=D\times S^1$ then
$F(x,s)=(x^{o_w},s)\in\overline{V}_K'=D\times S^1$. Now the Seifert
fibered structure on $\overline{V}_K'$ is define by the curve of slope
$b'/a'$, and it lifts to the curve of slope $o_wb'/a'$ and hence
$a=a'$ and $b=o_wb'$ and hence $v_{a'}\equiv o_wv_a\mod a'$.

We can now define $\tilde{\pi}$ and
$\psi$. $\tilde{\pi}(x,s)=\overline{\pi}'(x^{o_w},s)$ and
$\psi(t_a)=t_\alpha^{o_w\lambda}$. So first we need to check that
$\tilde{\pi}$ is equivariant with respect to $\psi$
\begin{align*}
 \psi(t_a)\widetilde{\pi}(x,s)&=
 t_\alpha^{o_w\lambda}\overline{\pi}'(x^{o_w},s) 
 =\psi'(t_a')\overline{\pi}'(x^{o_w},s)=
\overline{\pi}'(t_{a'}^{v_{a'}}x^{o_w},t_{a'} s)\\
&=\overline{\pi}'(t_{a'}^{o_wv_a}x^{o_w},t_{a'} s)=
\overline{\pi}'((t_a^{v_a}x)^{o_w},t_{a'} s)=
\tilde{\pi}(t_a^{v_a}x,t_as)  
\end{align*}

So what is left is just checking that the diagram commutes. Start by
taking $(x,s)\in D\times S^1$, then the one composition is taking
$\tilde{\pi}(x,s)$ and send it to the class in $(D\times
S^1)/(\Z/\alpha\Z)$, and the other composition is sending $(x,s)$ into
the class in  $(D\times S^1)/(\Z/a\Z)$ and then take $\pi$. If we
denote the class in  $(D\times S^1)/(\Z/k\Z)$ where the action is
given by the integers $k,l$ by $[x,s]_{(k,l)}$, then we need to see
that $\pi([x,s]_{(a,b)})= [\tilde{\pi}(x,s)]_{(\alpha,\beta)}$. Now
$\pi([x,s]_{(a,b)})=f(\overline{\pi}(F([x,s]_{(a,b)})))$ by
definition. Since the Seifert fibered structure on $V_K$ is given by
pulling back the Seifert fibered structure on $\overline{V}_K'$ by
$(x^{o_w},s)$, $F([x,s]_{(a,b)})=[x^{o_w},s]_{(a',b')}$. By construction
we have that
$\overline{\pi}([x,s]_{(a',b')})=[\overline{\pi}'(x,s)]_{(\alpha',\beta')}$, 
and by definition
$f([x,s]_{(\alpha',\beta')})=[x,s]_{(\alpha,\beta)}$, hence
$\pi([x,s]_{(a,b)})=[\overline{\pi}''(x^{o_w},s)]_{(\alpha,\beta)}$,
and the diagram commutes. 

Last we need to calculate the degree of $\pi\vert_U$. First the degree of
$\morf{\pi\vert_{V_K}}{V_K}{N_{K_w}}$ is $o_w\lambda$. The degree of
$\overline{\pi}$ on
$\overline{\pi}\inv(N_K')=\bigcup_{i=1}^m\overline{V}_K'$ is
$\num{H_1(\overline{M})}$, hence
$m=\num{H_1(\overline{M})}/\lambda$. $U=\bigcup_{i=1}^{(\prod_{w'\neq
    w}o_{w'})}\overline{\pi}\inv(\overline{V}_k')$ and hence the number of
component of $U$ is $(\num{H_1(\overline{M})}/\lambda)\prod_{w'\neq
    w}o_{w'}$ and $\deg(\pi\vert_U)=
  o_w\lambda(\num{H_1(\overline{M})}/\lambda)\prod_{w'\neq w}o_{w'}
  =\num{H_1(\overline{M})}\prod_{w'}o_{w'}=\num{H_1^{orb}(M)}$.
\end{proof}

Combining the above results gives us the following theorem

\begin{thm}\label{linkascover}
Let $M$ be a rational homology sphere graph orbifold with splice
diagram $\Gamma$ satisfying the semigroup condition. Suppose there
exist a graph orbifold $M'$ also with splice diagram $\Gamma$, and a
set of reducible semigroup coefficients $\{\alpha\}$ for $M'$ satisfying
the orbifold congruence condition. Then the link of the complete
intersection defined by $(\Gamma,\{\alpha\})$ is homeomorphic to the
universal abelian cover of $M$.
\end{thm}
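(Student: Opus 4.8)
The plan is to assemble the theorem from the pieces already established in Section \ref{universalabcoveroforbifolds}. The key observation is that the universal abelian cover of a $\Q$HS graph orbifold $M$ depends only on the splice diagram $\Gamma$ together with the orbifold degrees at the leaves; the actual construction of the cover as the link of $(\Gamma,\{\alpha\})$ only used these data and the hypotheses (semigroup condition, reducible congruence coefficients) attached to $M'$. So the first step is to observe that since $M$ and $M'$ have the same splice diagram $\Gamma$, and since the orbifold degrees $o_w$ at leaves are recorded by $\Gamma$ (a leaf of weight $1$ has $o_w=1$, and in general $o_w$ is read off from $\Gamma$ and $\overline{\Gamma}$ via $o_w = d_{ve}/\overline{d}_{ve}$ suitably interpreted as in Proposition \ref{homologycomparison} and the discussion preceding the definition of $M$-reducible), we may as well run the whole construction using $M'$ and conclude the same statement for $M$. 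More precisely, one should first reduce to the case $M = M'$, or rather note that everything proved about $\pi\colon L(V)\to M$ in fact only used the combinatorial data and the congruence hypothesis, which we are assuming for $M'$ but which, through the shared splice diagram, transports to $M$.

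Second, I would invoke the definition of $\pi\colon L(V)\to M$ together with the two displayed facts proved just before the theorem: on the one hand, the restriction $\pi|_{L(V)-\pi\inv(S)}\colon (L(V)-\pi\inv(S)) \to (M-S)$ is an abelian cover of degree $\num{H_1^{orb}(M)}$; on the other hand, by the Proposition immediately preceding the theorem, for each orbifold curve $K_w$ the restriction $\pi|_{\pi\inv(N_{K_w})}\colon \pi\inv(N_{K_w})\to N_{K_w}$ is an abelian orbifold cover, again of degree $\num{H_1^{orb}(M)}$. Gluing these along the boundary tori $\partial N_{K_w}$ (where the two descriptions agree, since away from the cores the orbifold covers restrict to the ordinary cover of $M-S$), one concludes that $\pi\colon L(V)\to M$ is an abelian orbifold cover of degree $\num{H_1^{orb}(M)}$.

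Third, since $\num{H_1^{orb}(M)}$ is precisely the order of the deck group of the universal abelian orbifold cover of $M$ — because $H_1^{orb}(M)$ is by definition the abelianization of $\pi_1^{orb}(M)$, and an abelian orbifold cover of degree equal to the order of $H_1^{orb}(M)$ must be \emph{the} maximal abelian cover — we deduce that $L(V)$ is the universal abelian cover of $M$. One must check connectedness of $L(V)$ to be sure this is the full abelian cover rather than a disconnected one; this follows because $\overline{V}$ is connected (it is the universal abelian cover of the irreducible singularity $X$ by \cite{neumannandwahl2}, hence connected) and $F\colon V\to\overline{V}$ restricted to links is a connected branched cover since the branch locus meets every component — alternatively one appeals to the degree count matching $\num{H_1^{orb}(M)}$ exactly, which forces the cover to be connected and abelian, i.e.\ universal abelian. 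Finally, the link of the complete intersection defined by $(\Gamma,\{\alpha\})$ is by definition (the link of) $V$, which is $L(V)$ up to homeomorphism by the choice of small embeddings, completing the proof.

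\textbf{Main obstacle.} The delicate point is the gluing in the second step: one has two a priori different orbifold-cover structures on $L(V)$, one coming from the ordinary-cover picture on $M-S$ and one from the local model near each $K_w$, and one must verify that they are compatible on the overlap collar $\partial N_{K_w}\times I$ so that they patch to a single orbifold cover of all of $M$. The commuting-diagram computation in the preceding Proposition is exactly what makes this work, but checking that the deck transformations match up globally (not just locally) — i.e.\ that the monodromy representation $\pi_1^{orb}(M)\to$ (deck group) is well-defined and abelian — is the real content; once that is in hand the degree bookkeeping and the identification with the universal abelian cover are formal.
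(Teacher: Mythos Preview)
Your steps 2 and 3 are essentially correct and match the paper: the preceding results in Section~\ref{universalabcoveroforbifolds} assemble to show that $\pi\colon L(V)\to M'$ is an abelian orbifold cover of degree $\num{H_1^{orb}(M')}$, hence the universal abelian cover of $M'$.

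The genuine gap is in your step 1, the transport from $M'$ to $M$. Your claim that ``the orbifold degrees $o_w$ at leaves are recorded by $\Gamma$'' is false. The splice diagram records only the edge weight $n_w$ at a leaf; the orbifold degree $o_w$ can be any divisor of $n_w$, and different graph orbifolds with the \emph{same} splice diagram $\Gamma$ will in general have different orbifold degrees. (Indeed, in the algorithm of Section~\ref{constructingorbifolds} the $o_{ji}=\gcd(n_{ji},\alpha_{ji})$ depend on the chosen semigroup coefficients, so varying the $\alpha$'s already produces different orbifold structures for a fixed $\Gamma$.) Your proposed formula $o_w=d_{ve}/\overline{d}_{ve}$ requires knowing $\overline{\Gamma}=\Gamma(\overline{M})$, which in turn requires knowing the orbifold degrees of $M$ --- so the argument is circular. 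Consequently you cannot ``run the whole construction using $M'$ and conclude the same statement for $M$'': the map $F$ and hence $\pi$ are built from the orbifold degrees of $M'$, and there is no map $L(V)\to M$ produced by these arguments at all.

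The paper closes this gap not by any comparison of orbifold data, but by invoking an external result: the second main theorem of \cite{myarticle} (cf.\ Theorem~6.3 there, mentioned in the introduction), which asserts that the universal abelian cover of a $\Q$HS graph orbifold is determined up to homeomorphism by its splice diagram alone. Once you know $L(V)$ is the universal abelian cover of $M'$, that theorem immediately gives that $L(V)$ is also homeomorphic to the universal abelian cover of $M$, since $\Gamma(M)=\Gamma(M')=\Gamma$. Without this citation (or an independent proof of the same fact), your argument does not reach $M$.
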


\begin{proof}
The above show that $\morf{\pi}{L(V)}{M'}$ is an orbifold abelian
cover of degree $\num{H_1^{orb}(M)}$, and hence the universal abelian
cover of, $M'$, combining this with the second main theorem of
\cite{myarticle} gives the result.
\end{proof}

So to prove that the the splice diagram always define the universal
abelian cover, one just have to show that given $M$ with $\Gamma(M)$
satisfying the semigroup condition, there always exist a $M'$ with a
reducible set of admissible monomials satisfying the orbifold
congruence condition such that $\Gamma(M')=\Gamma(M)$. We will in the next
section show this is always true in the case of a splice diagram with
only two nodes, by constructing such a $M'$ from any two node splice
diagram satisfying the semigroup condition.

\section{Algorithm for construction an orbifold with a given two node
  splice diagram}\label{constructingorbifolds}

In this section we will make an algorithm which given any two node
splice diagram $\Gamma$ satisfying the ideal generator condition gives a graph
orbifold $M$ with $\Gamma(M)=\Gamma$. We will construct $M$ by giving
a plumbing diagram $\Delta$ such that $\Delta=\Delta(M)$. We will not
give a complete plumbing diagram, but specify the orbifold degrees,
the weight at the nodes, and the continued fraction associated to the
strings. From this data one can the obtain the complete plumbing
diagram if needed. Let the splice diagram look like the following
$$\splicediag{8}{30}{
  &&&&&\\
  &\Circ &&&&\Circ \\
  \Gamma=&\Vdots&\overtag\Circ {\epsilon_0} {8pt}\lineto[ul]_(.5){n_{01}}
  \lineto[dl]^(.5){n_{0k_0}}
  \lineto[rr]^(.25){r_0}^(.75){r_1}&{}\undertag{}{e}{8pt}
 &\overtag\Circ{\epsilon_1}{8pt}
  \lineto[ur]^(.5){n_{11}}
  \lineto[dr]_(.5){n_{1k_1}}&\Vdots\\
  &\Circ &&&&\Circ\hbox to 0 pt{~.\hss} }$$
Let $N_j=\prod_{i}n_{ji}$, and let $D$ be the edge determinant of the
  central edge.
The plumbing diagram will be given by
$$
\xymatrix@R=6pt@C=24pt@M=0pt@W=0pt@H=0pt{
&&&&&&&&&&\\
&\Circ\ar[ul]_{(o_{01})} &&&&&&&&\Circ \ar[ur]^{(o_{11})}&\\
\Delta=&\Vdots&&&\overtag{\Circ}{-b_0}{8pt}
\lineto[rr]\dashto[ulll]_{\overleftarrow{(\frac{n_{01}}{o_{01}})/p_{01}}}
\dashto[dlll]^{\overleftarrow{(\frac{n_{0k_0}}{o_{0k_0}})/p_{0k_0}}}
&& 
\overtag{\Circ}{-b_1}{8pt}
\dashto[urrr]^{\overrightarrow{(\frac{n_{11}}{o_{11}})/p_{11}}} 
\dashto[drrr]_{\overrightarrow{(\frac{n_{1k_1}}{o_{1k_1}})/p_{1k_1}}}
&&&\Vdots&\\
&\Circ \ar[dl]^{(o_{0k_0})}&&&&&&&&\Circ \ar[dr]_{(o_{1k_1})}&\\
&&&&&&&&&&\hbox to 0pt {~.\hss}}
$$
So we need to specify $o_{ji}$, $p_{ji}$ and $b_j$, from the
information given by $\Gamma$.
\begin{itemize}
\item First chose integer $\alpha_{ji}$ such that $r_{1-j}=\sum_i
  \alpha_{ji}\frac{N_j}{n_{ji}}$, these integer exist since $\Gamma$
  satisfy the ideal generator condition. If $\Gamma$ furthermore
  satisfy the semigroup condition, then the $\alpha_{ji}$'s can be
  chosen to be non negative, and the choice of $\alpha_{ji}$'s is a
  choice of semigroup coefficients for $r_{1-j}$. 
\item Let $\lambda_{ji}$ be the smallest integer, such that
  $\lambda_{ji}n_{ji}\geq \epsilon_j\alpha_{ji}$ if $D>0$, and 
$\lambda_{ji}n_{ji}\geq -\epsilon_j\alpha_{ji}$ if $D<0$
\item Let $o_{ji}=\gcd(n_{ji},\alpha_{ji})$.
\item Let $p_{ji}=\frac{\lambda_{ji}n_{ji}-\alpha_{ji}}{o_{ji}}$ if
  $D>0$, and $p_{ji}=\frac{\lambda_{ji}n_{ji}+\alpha_{ji}}{o_{ji}}$ if
  $D<0$.
\item Let $b_j=\sum_i \lambda_{ji}$.
\end{itemize}
Notice that $\gcd(n_{ji}/o_{ji},p_{ji})=1$ so these choices gives a
well-defined plumbing.
\begin{prop}
Let $M$ be the graph orbifold given by $\Delta$ with the above
choices, then $\Gamma(M)=\Gamma$ and $\num{H_1^{orb}(M)}=D(e)$.
\end{prop}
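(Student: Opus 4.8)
The plan is to read the three data defining $\Gamma(M)$ --- the underlying tree, the edge weights, and the signs at the nodes --- directly off $\Delta$ using the propositions collected in Section~\ref{plumninganddisc}, and then to compute $\num{H_1^{orb}(M)}$ from the intersection form of $\Delta(M)$. Two facts make every determinant that occurs elementary. First, $A(M)$ is $A(\overline M)$ with the column of each vertex $v$ multiplied by $o_v$, so for any sub-plumbing $\Delta'$ of $\Delta(M)$ one has $\det(-A(\Delta'))=\big(\prod o_v\big)\det\big(-A(\overline{\Delta}')\big)$, the product running over the arrow-carrying vertices of $\Delta'$, which are the only scaled columns, by linearity of the determinant in each column. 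Second, the continued-fraction lemma of Section~\ref{orbifoldcongruencecondition}: a string with continued fraction $n/p$ has $\det(-A)=n$, and deleting the end vertex nearest a prescribed side leaves determinant $p$.

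The tree and the leaf weights come out at once. The diagram $\Delta$ has exactly the two vertices $-b_0,-b_1$ of valence $>2$ and exactly $k_0+k_1$ leaves, so suppressing valence-$2$ vertices produces the two-node tree of $\Gamma$ with the leaves matched to the $w_{ji}$. For a leaf edge, $\Delta(M)_{v_j,w_{ji}}$ is precisely the string of $w_{ji}$ carrying its arrow of weight $o_{ji}$, so $d_{v_j,w_{ji}}=\num{\det(-A(\Delta(M)_{v_j,w_{ji}}))}=o_{ji}\cdot(n_{ji}/o_{ji})=n_{ji}$, using that this string was chosen with continued fraction $(n_{ji}/o_{ji})/p_{ji}$. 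For the central edge $e$, cutting it leaves the star-shaped sub-plumbing at $v_1$: center $-b_1$, arms with continued fractions $(n_{1i}/o_{1i})/p_{1i}$ and arrows $o_{1i}$. Expanding $\det(-A)$ of a star along its central vertex gives $\det(-A(\Delta(M)_{v_0,e}))=N_1b_1-\sum_i o_{1i}p_{1i}\tfrac{N_1}{n_{1i}}$; substituting $b_1=\sum_i\lambda_{1i}$ and $o_{1i}p_{1i}=\lambda_{1i}n_{1i}\mp\alpha_{1i}$ (the sign according as $D>0$ or $D<0$), the $\lambda_{1i}$-terms telescope and one is left with $\pm\sum_i\alpha_{1i}\tfrac{N_1}{n_{1i}}=\pm r_0$, so $d_{v_0,e}=r_0$; symmetrically $d_{v_1,e}=r_1$. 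This is the one step where the ideal-generator condition is genuinely used, namely through the existence of the integers $\alpha_{ji}$ with $r_{1-j}=\sum_i\alpha_{ji}N_j/n_{ji}$.

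For the signs, by Lemma~\ref{dotproductcomparison} the $(v_j,v_j)$-entry of $A(M)\inv$ equals that of $A(\overline M)\inv$ (a node carries no arrow), and Cramer's rule expresses its sign through $\sign\det(-A(\overline M))$ and through the sign of the determinant of the star-shaped half at $v_{1-j}$, both of which the telescoping above has already evaluated; carrying this through the two cases $D>0$ and $D<0$, the sign at $v_j$ comes out equal to the prescribed $\epsilon_j$ --- it is exactly here that the case split, and the appearance of $\epsilon_j$ in the choice of $\lambda_{ji}$, do their work. Finally, the vertices with $o_v>1$ are precisely the orbifold curves of $M$, so $\det A(M)=Q\det A(\overline M)$ with $Q=\prod_K o_K$, whence $\num{H_1^{orb}(M)}=Q\num{H_1(\overline M)}=Q\num{\det A(\overline M)}=\num{\det A(M)}$; and the edge-splitting recursion along $e$, $\det(-A(M))=\det(-A(G_0))\det(-A(G_1))-N_0N_1$ with $G_0,G_1$ the two star-shaped halves (deleting the node from $G_i$ leaves the disjoint union of its arms, of product determinant $N_0$, resp.\ $N_1$), combined with $\num{\det(-A(G_0))}=r_1$ and $\num{\det(-A(G_1))}=r_0$, gives $\det(-A(M))=r_0r_1-N_0N_1$ up to sign. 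Hence $\num{H_1^{orb}(M)}=\num{r_0r_1-N_0N_1}=\num{D}=D(e)$.

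I expect the main obstacle to lie not in any single computation but in keeping all the orientation data coherent: the split into $D>0$ and $D<0$, the reading directions recorded by the arrows $\overrightarrow{n/p}$ and $\overleftarrow{n/p}$ on the continued fractions, and the signs at the two nodes must all be threaded through so that the telescoping collapses with the correct sign and the final sign comparison agrees with the decorations of $\Gamma$; checking that the strings produced by the algorithm are honest quasi-minimal plumbing strings --- so that the chosen $p_{ji}$ really are their continued-fraction invariants --- and handling the degenerate cases of empty strings and $-1$-vertices belong to the same bookkeeping.
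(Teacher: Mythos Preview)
Your argument is essentially the paper's own: the same telescoping computation of the star-shaped determinants via $b_j=\sum_i\lambda_{ji}$ and $o_{ji}p_{ji}=\lambda_{ji}n_{ji}\mp\alpha_{ji}$, yielding $\pm r_{1-j}$, is exactly what the paper does. The only genuine difference is in how the remaining bookkeeping is discharged: the paper invokes the (unnormalized) edge determinant equation from \cite{myarticle} (Lemma~3.2 and Corollary~3.3 there) to settle both the sign compatibility at the nodes and the identity $\num{H_1^{orb}(M)}=D(e)$, observing that the fiber intersection number along $e$ equals $1$ by construction, whereas you redo these two points by hand via Cramer's rule and the edge-splitting recursion for $\det(-A)$. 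Both routes are valid; yours is more self-contained, the paper's is shorter once one has the cited lemmas. Your closing remarks about orientation coherence and the degenerate string cases are well taken and correspond to what the paper's case split $D\gtrless 0$ is absorbing.
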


\begin{proof}
Since the weight to the leaves in $\Gamma(M)$ is
$(n_{ji}/o_{ji})o_{ji}$, it is the same weight as in $\Gamma$. Next we
start by considering the case that $D>0$, then the unnormalized
edge determinant equation (Lemma 3.2 in \cite{myarticle}) implies that $\det(M)>0$, so the
only thing to check is that $\tilde{r}_j$, the weights associated to
the central string in $\Gamma(M)$, is $\epsilon_{1-j}r_j$.
\begin{align*}
\tilde{r}_{1-j} &=\big(\prod_io_{ji}\big)\det(\Delta(\widetilde{M})_{v_{j+1}e})
=\big(\prod_io_{ji}\big)\big(\prod_in_{ji}/o_{ji}\big)\big(
b_j-\sum_i\frac{p_{ji}}{n_{ji}{o_ji}}\big) \\
&= N_j\big(\sum_i\lambda_{ji}-\sum_ip_{ji}o_{ji}/n_{ji}\big)
=\sum_i\big(\lambda_{ji}N_j-\frac{\lambda_{ji}n_{ji}-
  \epsilon_j\alpha_{ji}}{o_{ji}}o_{ji}\frac{N_j}{n_{ji}}\big) \\
&=\sum_i\epsilon_j\alpha_{ji}\frac{N_j}{n_{ji}} =\epsilon_jr_{1-j}.
\end{align*}   
The case whit $D<0$ is similar, but now $\det(M)<0$ and
$\tilde{r}_j=-\epsilon_{1-j}r_j$.
The last statement follows from the edge determinant equation
(Corollary 3.3 in \cite{myarticle}), since
the fiber intersection number of $e$ is $1$, or by using the above
calculation to calculate $\det(\Delta)$.
\end{proof} 
Notice that $M$ do depend on the choice of $\alpha_{ji}$'s.
\begin{cor}
Let $\Gamma$ be a splice diagram satisfying the semigroup
condition. Then if $M$ is a graph orbifold given by the above
algorithm, then $M$ satisfy the orbifold congruence condition.
\end{cor}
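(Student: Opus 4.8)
The plan is to verify the congruence equation \eqref{congruenceeq} directly for the graph orbifold $M$ produced by the algorithm, using the explicit data $o_{ji}=\gcd(n_{ji},\alpha_{ji})$, $p_{ji}=(\lambda_{ji}n_{ji}-\alpha_{ji})/o_{ji}$ (in the case $D>0$; the case $D<0$ is symmetric), and $b_j=\sum_i\lambda_{ji}$. By the two-node analysis in Section \ref{orbifoldcongruencecondition}, the orbifold congruence condition for $\Delta$ is equivalent to the congruences
\[
\frac{\alpha_{lj}}{o_{lj}}\equiv -n\,p_{lj}\pmod{n_{lj}/o_{lj}},
\]
one for each node $v_l$ and each adjacent leaf index $j$, where $n$ is determined by $nd=r_0r_1-N_0N_1$, i.e.\ $n$ is the edge determinant divided by $d=\num{H_1^{orb}(M)}$. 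So the entire task reduces to checking this single family of congruences from the algorithm's choices.

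The key computation goes as follows. First I would substitute $p_{lj}=(\lambda_{lj}n_{lj}-\alpha_{lj})/o_{lj}$ into $-np_{lj}$ and reduce modulo $n_{lj}/o_{lj}$. Since $o_{lj}\cdot(n_{lj}/o_{lj})=n_{lj}$, the term $\lambda_{lj}n_{lj}/o_{lj}$ is divisible by $n_{lj}/o_{lj}$ only after we are careful about the factor of $o_{lj}$: working in $\Z/(n_{lj}/o_{lj})\Z$ we get $o_{lj}p_{lj}\equiv -\alpha_{lj}\pmod{n_{lj}}$, hence $-n\,p_{lj}\equiv n\alpha_{lj}/o_{lj}$ in the appropriate sense. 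Thus the congruence to be proved becomes $\alpha_{lj}/o_{lj}\equiv n\alpha_{lj}/o_{lj}\pmod{n_{lj}/o_{lj}}$, i.e.\ $(n-1)\alpha_{lj}/o_{lj}\equiv 0$. The point is then that $n\equiv 1\pmod{n_{lj}/o_{lj}}$ must be established from $nd=r_0r_1-N_0N_1$ together with how $d=\det(M)$ was computed in the preceding Proposition; essentially one uses that $d$ is (up to the product of orbifold degrees) the determinant $\det(\Delta)$ and that $n_{lj}/o_{lj}$ divides all the relevant quantities via the string determinants in Lemma (the continued-fraction lemma), so that modulo $n_{lj}/o_{lj}$ one has $r_0r_1\equiv N_0N_1\equiv$ something forcing $nd\equiv d$, and $d$ is invertible mod $n_{lj}/o_{lj}$ because $\gcd(n_{lj}/o_{lj},p_{lj})=1$ controls the discriminant-group torsion at that leaf.

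The main obstacle I expect is precisely this last step: showing $n\equiv 1\pmod{n_{lj}/o_{lj}}$, or whatever the correct residue turns out to be, cleanly from the edge-determinant identity and the explicit formula for $\det(M)$. One has to track the orbifold degrees carefully — $d=\num{H_1^{orb}(M)}=D(e)$ by the Proposition, and $D(e)=r_0r_1-N_0N_1$ up to sign since the fiber intersection number of $e$ is $1$, so in fact $n=\pm 1$ outright, which collapses the congruence to the trivial statement $0\equiv 0$. Verifying that $n=\pm1$ (with the sign matching $\sign(D)$) is the crux, and it follows from the degree computation $\num{H_1^{orb}(M)}=D(e)$ in the Proposition combined with $nd=r_0r_1-N_0N_1=\pm D(e)$. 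Once $n=\pm1$ is in hand, each congruence \eqref{congruenceeq} reads $\alpha_{lj}/o_{lj}\equiv \mp p_{lj}\pmod{n_{lj}/o_{lj}}$, which is immediate by substituting the algorithm's definition of $p_{lj}$ and reducing, since $\lambda_{lj}n_{lj}/o_{lj}$ vanishes modulo $n_{lj}/o_{lj}$ after accounting for $o_{lj}$, and the remaining term is exactly $\mp\alpha_{lj}/o_{lj}$. I would then remark that the $D<0$ case is handled identically with the sign flips already recorded in the Proposition, completing the proof.
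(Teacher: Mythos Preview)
Your proposal is correct and follows essentially the same route as the paper: verify the two-node congruence \eqref{congruenceeq} directly from the algorithm's definition of $p_{lj}$, using that $n=1$ because the preceding Proposition gives $d=\lvert H_1^{orb}(M)\rvert=D(e)=r_0r_1-N_0N_1=nd$. Your intermediate worry about proving $n\equiv 1\pmod{n_{lj}/o_{lj}}$ is an unnecessary detour---as you yourself observe near the end, $n=1$ holds outright, and then $p_{lj}=\lambda_{lj}(n_{lj}/o_{lj})-\alpha_{lj}/o_{lj}\equiv -\alpha_{lj}/o_{lj}\pmod{n_{lj}/o_{lj}}$ is immediate, which is exactly the paper's one-line computation.
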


\begin{proof}
We need to check that the equations $\frac{\alpha_{lj}}{o_{lj}}\equiv
-np_{ji} \mod (n_{ji}/o_{ji})$ given by \eqref{congruenceeq} are
satisfied. Now $n=1$ so by definition
\begin{align*}
p_{ji}=\frac{\lambda_{ji}n_{ji}-\alpha_{ji}}{o_{ji}}=
\lambda_{ji}\frac{n_{ji}}{o_{ji}}-\frac{\alpha_{ji}}{o_{ji}}.
\end{align*}
Which implies that $-\frac{\alpha_{lj}}{o_{lj}}\equiv
p_{ji} \mod (n_{ji}/o_{ji})$, and hence the congruence condition is satisfied. 
\end{proof}

Notice that the $\alpha_{ji}$'s are a reducible set of semigroup
coefficients by the definition of the $o_{ji}$'s. So combining this with
Theorem \ref{linkascover} we get that given a two node splice diagram $\Gamma$
satisfying the semigroup condition, the link of any splice diagram
surface singularity is homeomorphic to the universal abelian cover of
any graph orbifold with $\Gamma$ as its splice diagram.

Now this method for proving that the link of the splice diagram
equations are the universal abelian covers does not easily generalize
to more the two nodes. Already in the $3$-node case is the semigroup
(or ideal) condition not sufficient for a splice diagram to be
realized by a graph orbifold. The following diagram
$$\splicediag{8}{30}{ 
 &&&&\Circ &&&\\
 &\Circ &&& &&&\Circ \\
 \Gamma=& &\Circ \lineto[ul]_(.5){2}
  \lineto[dl]^(.5){3}
  \lineto[rr]^(.25){11}^(.75){10}&\undertag{} {e_1} {4pt} & \Circ
  \lineto[uu]^(.5){2} \lineto[rr]^(.25){7}^(.75){20} &\undertag{} {e_1} {4pt} &
\Circ \lineto[ur]^(.5){2}
  \lineto[dr]_(.5){3}&\\
&\Circ&&&&&&\overtag\Circ {w} {8pt}
\hbox to 0 pt{~,\hss} }$$ 
is not the splice diagram of any graph orbifold, even though it
satisfy the semigroup condition. The reason is that one has by the
edge determinant equation (Corollary 3.3 in \cite{myarticle}) that the
order of $H^{orb}_1(M)$ divides all edge determinants, so if $\Gamma$
where the splice diagram for some $M$, then $\num{H^{orb}_1(M)}$ would
divide $D(e_1)=26$ and $D(e_2)=20$, and hence divide $2$. Now $M$ can
not be an integer homology sphere because then all weight adjacent to
a node would have to be pairwise coprime according to
\cite{EisenbudNeumann}, so $H^{orb}_1(M)=\Z/2\Z$. Using the
topological description of the ideal generator given in section 12.1
in \cite{neumannandwahl2}, one easily sees that given any edge  $e$ in
$\Gamma(M)$, the product of the two ideal generators associated to
each of the ends of $e$ has to divide the order of $H_1^{orb}(M)$,
this includes edges to leaves. Now the ideal generator associated to
leaf $w$ is $4$, and hence do not divide the order of $H_1^{orb}(M)$
so we get a contradiction.

The above consideration on ideal generators leads to the following
necessary condition for a splice diagram $\Gamma$ to be realized from
a graph orbifold: The product of the ideal generators associated to
any edge has to divide all the edge determinants. 

But even this condition is not sufficient. The following splice diagram
satisfy it, but is not realizable by any graph orbifold. 
$$\splicediag{8}{30}{ 
&&&\Circ &&\Circ && \\
&&&& \overtag\Circ {v_3} {8pt}\lineto[ul]_(.5){2} \lineto[ur]^(.5){5} &&&\\
&&&&&&&\\
\Gamma'=&&&&&&&\\
&&&&&&&\\
&&&&\Circ \lineto[uuuu]_(.25){40}_(.75){165}&&&\\
&\Circ &&& &&&\Circ \\
 && \undertag\Circ {v_1} {4pt}
  \lineto[ul]_(.5){2} \lineto[uurr]^(.25){90}^(.75){42} 
\lineto[dl]^(.5){3} &&&& 
\undertag\Circ {v_2} {4pt}
  \lineto[ur]^(.5){3} \lineto[uull]_(.25){1722}_(.75){15} 
\lineto[dr]_(.5){5} &\\
&\Circ &&&&&&\Circ
\hbox to 0 pt{~.\hss} }$$  
The edge determinant equation and the above mentioned condition
implies that $\num{H^{orb}_1(M)}=30$ if $M$ is a graph orbifold
realizing $\Gamma'$. Then using this and the edge determinant equation
we can make a splice diagram for $M_{v_1e_1}$ since we know that
$\num{H^{orb}_1(M_{v_1e_1})}=90$. We can continue doing this until we
get that there exists a graph manifold $M'$ with $\num{H_1(M')}=60$
and with the following splice diagram.    
$$\splicediag{8}{30}{ 
 &\Circ & \\
&&\\
&&\\
 &\Circ \lineto[uuu]_(.5){10}
  \lineto[ddl]^(.5){6}
    \lineto[ddr]_(.5){21}&\\
&&\\
\Circ&&\Circ 
\hbox to 0 pt{~,\hss} }$$ 
we now this has to be manifold, since all the singular fibers has come
from the process of creating $M_{ve}$'s, and hence does not have
orbifold curves. This means $M'$ is a Seifert fibered manifold with
Seifert invariants $(1,-b),(6,\beta_1)(21,\beta_2),(10,\beta_3)$,
a simple calculation shows that such a Seifert fibered manifold can not
have first homology group of order $60$.

The failure of the last example is not as easy as the first to specify
in a nice condition, so do at the moment not have a good idea on a
set of necessary conditions for a splice diagram to be realized by
graph orbifold. Even without this, it might still be possible to used
Theorem \ref{linkascover} to prove it for more general graph orbifolds
that just the once having two node splice diagram. 

An other interesting question is, what are splice diagram
singularities coming from diagrams as above. 

\newpage

\bibliography{congruencecondition2nodecase}

\begin{thebibliography}{NW05b}

\bibitem[ALR07]{orbifold}
Alejandro Adem, Johann Leida, and Yongbin Ruan.
\newblock {\em Orbifolds and stringy topology}, volume 171 of {\em Cambridge
  Tracts in Mathematics}.
\newblock Cambridge University Press, Cambridge, 2007.

\bibitem[EN85]{EisenbudNeumann}
David Eisenbud and Walter Neumann.
\newblock {\em Three-dimensional link theory and invariants of plane curve
  singularities}, volume 110 of {\em Annals of Mathematics Studies}.
\newblock Princeton University Press, Princeton, NJ, 1985.

\bibitem[Neu81]{plumbing}
Walter~D. Neumann.
\newblock A calculus for plumbing applied to the topology of complex surface
  singularities and degenerating complex curves.
\newblock {\em Trans. Amer. Math. Soc.}, 268(2):299--344, 1981.

\bibitem[NW02]{NeumannWahl3}
Walter~D. Neumann and Jonathan Wahl.
\newblock Universal abelian covers of surface singularities.
\newblock In {\em Trends in singularities}, Trends Math., pages 181--190.
  Birkh\"auser, Basel, 2002.

\bibitem[NW05a]{neumannandwahl2}
Walter~D. Neumann and Jonathan Wahl.
\newblock Complete intersection singularities of splice type as universal
  abelian covers.
\newblock {\em Geom. Topol.}, 9:699--755 (electronic), 2005.

\bibitem[NW05b]{neumannandwahl1}
Walter~D. Neumann and Jonathan Wahl.
\newblock Complex surface singularities with integral homology sphere links.
\newblock {\em Geom. Topol.}, 9:757--811 (electronic), 2005.

\bibitem[Ped10a]{myarticle}
Helge Pedersen.
\newblock Splice diagram determining singularity links and universal abelian
  covers.
\newblock {\em Geometriae Dedicata}, pages 1--30, 2010.
\newblock 10.1007/s10711-010-9495-6.

\bibitem[Ped10b]{constructinabcovers}
Helge~M\o{}ller Pedersen.
\newblock Constructing universal abelian covers of graph manifolds.
\newblock In {\em arXiv:1011.0555}. 2010.

\bibitem[Sco83]{scott}
Peter Scott.
\newblock The geometries of {$3$}-manifolds.
\newblock {\em Bull. London Math. Soc.}, 15(5):401--487, 1983.

\bibitem[Sie80]{Siebenmann}
L.~Siebenmann.
\newblock On vanishing of the {R}ohlin invariant and nonfinitely amphicheiral
  homology {$3$}-spheres.
\newblock In {\em Topology Symposium, Siegen 1979 (Proc. Sympos., Univ. Siegen,
  Siegen, 1979)}, volume 788 of {\em Lecture Notes in Math.}, pages 172--222.
  Springer, Berlin, 1980.

\end{thebibliography}

\end{document}